\newcommand{\GL}{{\mathrm{GL}}}
\newcommand{\Hom}{{\mathrm{Hom}}}
\newcommand{\rk}{{\mathrm{k}}}
\newcommand{\Spec}{{\mathrm{Spec}}}
\renewcommand{\rk}{\mathrm k}
\newcommand{\C}{\mathbb{C}}
\newcommand{\R}{\mathbb R}
\newcommand{\be}{\begin {equation}}
\newcommand{\ee}{\end {equation}}
\newcommand{\bee}{\begin {equation*}}
\newcommand{\eee}{\end {equation*}}
\newcommand{\cf}{\emph{cf.}~}
\theoremstyle{Theorem}
\theoremstyle{Theorem}
\newtheorem{lem}{Lemma}[section]
\newtheorem{corl}[lem]{Corollary}
\newtheorem{thml}[lem]{Theorem}
\theoremstyle{Theorem}
\newtheorem{prp}{Proposition}[section]
\newtheorem{corp}[prp]{Corollary}
\newtheorem{lemp}[prp]{Lemma}
\newtheorem{thmp}[prp]{Theorem}
\theoremstyle{Plain}
\theoremstyle{Definition}
\newtheorem{dfn}{Definition}[section]
\newtheorem{dfnl}[lem]{Definition}
\newtheorem{thmd}[dfn]{Theorem}
\begin{document}

\title[Chevalley's theorem]{Chevalley's theorem for affine Nash groups}

\author[Y. Fang]{Yingjue Fang}

\address{College of Mathematics and Computational Science, Shenzhen University, Shenzhen, 518060, China}
\email{joyfang@szu.edu.cn}

\author [B. Sun] {Binyong Sun}
\address{Academy of Mathematics and Systems Science\\
Chinese Academy of Sciences\\
Beijing, 100190,  China} \email{sun@math.ac.cn}

\subjclass[2000]{22E15, 14L10, 14P20}

\keywords{Nash manifold, Nash group, algebraic group, Chevalley's theorem}
\thanks{B. Sun was  supported by NSFC Grants 11222101 and 11321101.}

\begin{abstract}
We formulate and prove Chevalley's theorem in the setting of affine Nash groups. As a consequence, we show that the semi-direct product of two almost linear Nash groups is still an almost linear Nash group.
\end{abstract}

 \maketitle

\section{Introduction}

The reader is referred to \cite{BCR, Sh} for basic notions concerning  Nash manifolds and Nash maps. See also \cite[Section 2]{Sun}.
Recall that a Nash group is a group which is simultaneously a Nash manifold so that all group operations are Nash maps.  A Nash manifold is said to be affine if it is Nash diffeomorphic to a Nash submanifold of some finite dimensional real vector spaces. It is known that
every affine Nash manifold is actually Nash diffeomorphic to a closed Nash
submanifold of some finite dimensional real vector spaces (\cf \cite[Section 2.22]{Sh2}). A Nash group is said to be affine if it is affine as a Nash manifold. Thanks to the work of  E. Hrushovski and A. Pillay (see Theorem \ref{main30}), we know that affine Nash groups are closely related to real algebraic groups.

Chevalley's theorem (see Theorem \ref{mainch}) is a fundamental result in the structure theory of algebraic groups. In this note, we will formulate and prove an analogue of Chevalley's theorem in the setting of affine Nash groups.

Recall from \cite{Sun} that a Nash group is said to be almost linear if there exists a Nash homomorphism with finite kernel from it to the Nash group $\GL_n(\R)$, for some $n\geq 0$. Using \cite[Proposition III.1.7]{Sh}, we know that every almost linear Nash group is affine.  Almost linear Nash groups provide a very convenient setting for the  study of infinite dimensional smooth representations (\cf  \cite{AGKL, AGS, du, SZ}). The structure theory of almost linear Nash groups is systematically studied in \cite{Sun}. On the other hand, we introduce the following definition.

\begin{dfn}
An affine Nash group is said to be complete if it has no non-trivial connected almost linear Nash subgroup. An abelian Nash manifold is a connected, complete affine Nash group.
\end{dfn}

Similar to abelian varieties,  every abelian Nash manifold is  compact and commutative  as a Lie group (see Corollary \ref{abncc}). But a connected, compact, abelian, affine Nash group is not necessarily an abelian Nash group.  For example, the group of complex numbers of modulus one is naturally an affine Nash group. It is almost linear, and is not an abelian Nash manifold.

In Proposition \ref{quotient0}, we will prove that the quotients of  affine Nash groups by their Nash subgroups are naturally affine Nash manifolds, and when the Nash subgroups are normal, the quotients are naturally affine Nash groups. The following theorem is an analogue of Chevalley's theorem for affine Nash groups.
\begin{thmd}\label{main}
Let $G$ be a connected affine Nash group. Then there exists a unique connected normal almost linear Nash subgroup $H$ of $G$ such that $G/H$ is an abelian Nash manifold.
\end{thmd}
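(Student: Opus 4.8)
The strategy is to transport the classical statement (Theorem \ref{mainch}) through the Hrushovski--Pillay correspondence (Theorem \ref{main30}), rather than to reprove Chevalley's theorem from scratch inside the Nash category. First I would use Theorem \ref{main30} to attach to the connected affine Nash group $G$ a connected real algebraic group $\G$ whose real points $\G(\R)$ are, up to the usual finite-cover and finite-index discrepancies, Nash isomorphic to $G$; since $G$ is connected I may arrange $\G$ to be connected as well. Applying Theorem \ref{mainch} to $\G$ produces a unique connected normal linear algebraic subgroup $\H \triangleleft \G$ with $\G/\H = \mathbf{A}$ an abelian variety. I would then let $H$ be the connected Nash subgroup of $G$ corresponding to the identity component of $\H(\R)$.

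Next I would verify the three required properties of $H$. Normality of $H$ in $G$ is inherited from normality of $\H$ in $\G$ at the level of real points and their identity components. That $H$ is almost linear is immediate: a faithful algebraic embedding $\H \incl \GL_n$ restricts to an injective Nash homomorphism $\H(\R) \to \GL_n(\R)$, hence to one on $H$, exhibiting the required finite- (indeed trivial-) kernel homomorphism to $\GL_n(\R)$. By Proposition \ref{quotient0}, $G/H$ is a connected affine Nash group, corresponding through Theorem \ref{main30} to a component of the real points of $\mathbf{A}$. To see that $G/H$ is an abelian Nash manifold I must check completeness, i.e. the absence of a nontrivial connected almost linear Nash subgroup. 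Such a subgroup would transport to a nontrivial connected linear algebraic subgroup of $\mathbf{A}$; but a closed subgroup of an abelian variety is complete, and a complete affine group is finite, so a connected one is trivial. This contradiction gives completeness, and existence follows.

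For uniqueness I would argue exactly as in the algebraic case, where a homomorphism from an affine group to an abelian variety is constant. Suppose $H'$ is another connected normal almost linear Nash subgroup with $G/H'$ an abelian Nash manifold, and consider the composite $f \colon H' \incl G \to G/H$. Its image is a connected Nash subgroup of $G/H$, and it is almost linear because the image of an almost linear Nash group under a Nash homomorphism is again almost linear (a structural fact I would cite from \cite{Sun}). Since $G/H$ is complete, $f(H') = \{e\}$, so $H' \subseteq H$; interchanging the roles of $H$ and $H'$ gives the reverse inclusion, whence $H = H'$.

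The main obstacle is the bookkeeping in the transport step. The correspondence of Theorem \ref{main30} relates affine Nash groups to real algebraic groups only up to finite covers and finite-index subgroups, and the real points $\G(\R)$ of a connected algebraic group are typically disconnected; one must therefore match the Nash quotient $G/H$ furnished by Proposition \ref{quotient0} with the correct component of $\mathbf{A}(\R)$, and check that passage to identity components commutes with forming the algebraic quotient $\G \to \G/\H$. Reconciling these component and finite-index issues --- rather than any single hard inequality --- is where the real care is needed; as a consistency check, the compactness of $\mathbf{A}(\R)$ recovers the compactness of abelian Nash manifolds recorded in Corollary \ref{abncc}.
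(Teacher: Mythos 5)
Your proposal is correct in substance, and for existence it is essentially the paper's own argument: take an algebraization $\phi\colon G\to \mathsf G(\R)$ (with $\mathsf G$ connected, since the image of $G$ is Zariski dense), apply Theorem \ref{mainch} to get the connected normal linear subgroup $\mathsf L$, and set $H=\bigl(\phi^{-1}(\mathsf L(\R))\bigr)^{\circ}$; the paper then observes that $\mathsf L$ is an algebraization of $H$ and $\mathsf G/\mathsf L$ an algebraization of $G/H$, and concludes by Lemmas \ref{alge} and \ref{alge2} --- exactly the transport bookkeeping you describe, including the component and finite-kernel issues you flag. Where you genuinely diverge is uniqueness. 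The paper transports the competitor back to the algebraic side: the Zariski closure $\mathsf L'$ of $\phi(L')$ is linear with $\mathsf G/\mathsf L'$ an abelian variety (Lemmas \ref{alge}, \ref{alge2}), uniqueness in Theorem \ref{mainch} gives $\mathsf L'=\mathsf L$, and then $L'=L$ because both are connected with the same Lie algebra (via Lemma \ref{nz0}). You instead run the classical uniqueness argument inside the Nash category: the composite $H'\hookrightarrow G\to G/H$ is trivial, so $H'\subseteq H$, and symmetry finishes. This works and is arguably more transparent than the paper's Lie-algebra step, but your stated justification for triviality has a weak link: you claim $f(H')$ is an almost linear Nash subgroup of $G/H$, citing \cite{Sun}, yet the relevant results there concern homomorphisms between almost linear groups, and $G/H$ is not almost linear, so it is not even clear a priori that $f(H')$ is a Nash subgroup. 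The correct tool is the paper's Corollary \ref{disjn} (proved by algebraization via Lemmas \ref{algehom}, \ref{alge}, \ref{alge2} and \ref{disj}): there is no nontrivial Nash homomorphism from a connected almost linear Nash group to an abelian Nash manifold; with that substitution your uniqueness argument is complete. One further small slip: the kernel of your homomorphism $H\to\GL_n(\R)$ is $H\cap\ker\phi$, which is finite but need not be trivial, since $H$ sits inside $G$ rather than inside $\mathsf L(\R)$; finiteness is of course all that almost linearity requires.
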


The semi-direct product of two affine Nash groups is clearly an affine Nash group. We will also prove the following theorem, which asserts that  the semi-direct product of two almost linear Nash groups is again an almost linear Nash group.

\begin{thmd}\label{semip}
Let $G$ and $H$ be almost linear Nash groups, with a Nash action $G\times H\rightarrow H$ of $G$ as Nash automorphisms of $H$. Then the semi-direct product $G\ltimes H$ is an almost linear Nash group.
  \end{thmd}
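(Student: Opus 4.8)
\medskip
\noindent\emph{Proof strategy for Theorem \ref{semip}.}
The plan is to manufacture a single finite-dimensional representation of $G\ltimes H$ with finite kernel out of the given data. To begin, I would fix Nash homomorphisms with finite kernel $\rho_G\colon G\to\GL_m(\R)$ and $\rho_H\colon H\to\GL_n(\R)=\GL(V)$ with $V=\R^n$, which exist by the definition of almost linear. Write $\alpha\colon G\times H\to H$ for the action and, for $g\in G$, let $\alpha_g\in\Aut(H)$ be the associated Nash automorphism, so that inside $G\ltimes H$ one has $(g,e)(e,h)(g,e)^{-1}=(e,\alpha_g(h))$. The idea is that $H$ is visible through the matrix coefficients of $\rho_H$, while $G$ acts on those coefficients by pulling back along $\alpha_g$; the problem is to house both actions in one finite-dimensional space.

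Concretely, I would let $W_0$ be the finite-dimensional span of the matrix coefficients $h\mapsto\rho_H(h)_{ij}$, viewed inside the space of Nash functions on $H$. On that function space $H$ acts by left translation $L_{h_0}f=f(h_0^{-1}\,\cdot\,)$ and $G$ acts by $g\cdot f=f\circ\alpha_g^{-1}$; using $\alpha_{g_1}\alpha_{g_2}=\alpha_{g_1g_2}$ one checks that these combine into a single representation $\pi$ of $G\ltimes H$, characterized by the relation $\pi(g)L_h\pi(g)^{-1}=L_{\alpha_g(h)}$. The subspace $W_0$ is already $L_H$-stable, and the $G$-translate $\pi(g)f$ of a matrix coefficient of $\rho_H$ is exactly a matrix coefficient of the twisted representation $\rho_H\circ\alpha_g$ of $H$.

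The crux is to enlarge $W_0$ to a finite-dimensional stable subspace. I would set
\[
 W=\span\{\pi(g)f : g\in G,\ f\in W_0\}.
\]
Since $W_0$ is $L_H$-stable, the identity $L_{h_0}\pi(g)f=\pi(g)L_{\alpha_g^{-1}(h_0)}f$ shows at once that $W$ is stable under both $L_H$ and $G$, hence is a $G\ltimes H$-submodule. The whole difficulty is concentrated in showing $\dim W<\infty$, i.e.\ that the $G$-action on the matrix coefficients of $\rho_H$ is locally finite; this I expect to be the main obstacle. Here the algebraic nature of Nash data should enter: by the theorem of Hrushovski and Pillay (Theorem \ref{main30}) the groups $G$ and $H$ and the action $\alpha$ are controlled by real algebraic groups and algebraic morphisms, so the functions $h\mapsto\rho_H(\alpha_g(h))_{ij}$ are restrictions to real points of regular functions, and the span of the $G$-translates of a finite-dimensional space of regular functions is finite-dimensional by the usual local finiteness of rational representations.

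Granting this, the assembly is routine. In a basis of $W$ the operators $\pi(g,h)$ depend in a Nash manner on $(g,h)$, being built from $\rho_H$ and $\alpha$, so $\pi\colon G\ltimes H\to\GL(W)$ is a Nash homomorphism. Letting $p\colon G\ltimes H\to G$ be the Nash projection, I would form
\[
 \Phi=(\rho_G\circ p)\oplus\pi\colon G\ltimes H\longrightarrow\GL_m(\R)\times\GL(W)\subseteq\GL_{m+\dim W}(\R),
\]
and compute the kernel: if $\Phi(g,h)=1$ then $\rho_G(g)=1$, forcing $g$ into the finite set $\ker\rho_G$, while $\pi(g)L_h$ acting trivially on $W_0$ gives $\rho_H(h^{-1}\alpha_g^{-1}(x))=\rho_H(x)$ for all $x\in H$; taking $x=e$ yields $\rho_H(h)=1$, so $h\in\ker\rho_H$ is finite as well. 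Thus $\ker\Phi\subseteq\ker\rho_G\times\ker\rho_H$ is finite, and $\Phi$ exhibits $G\ltimes H$ as an almost linear Nash group.
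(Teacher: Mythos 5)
Your construction is laid out sensibly and the final kernel computation is fine, but the step you yourself flag as the main obstacle --- finite-dimensionality of $W=\span\{\pi(g)f : g\in G,\ f\in W_0\}$ --- is a genuine gap, and the justification you offer for it does not work. Theorem \ref{main30} (Hrushovski--Pillay) algebraizes the \emph{groups} $G$ and $H$; it says nothing about the \emph{action} $\alpha$. Local finiteness of translates is a theorem about \emph{rational} actions: it requires an algebraic group $\mathsf G$, an affine algebraic group $\mathsf H$, and a single morphism $\mathsf G\times_{\Spec\R}\mathsf H\rightarrow \mathsf H$ of varieties extending $\alpha$ compatibly with the algebraizations, so that the comorphism $\CO(\mathsf H)\rightarrow\CO(\mathsf G)\otimes\CO(\mathsf H)$ is available. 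Nash-ness of the family $F(g,h)=\rho_H(\alpha_g^{-1}(h))_{ij}$ gives nothing of the sort: a Nash function of two variables need not decompose as a finite sum $\sum_k a_k(g)b_k(h)$ --- for instance the slices of the Nash function $\sqrt{1+g^2h^2}$ on $\R\times\R$ already span an infinite-dimensional space --- so ``the functions are Nash, hence controlled by algebraic data'' does not yield local finiteness. Nor does algebraizing each $\alpha_g$ separately help: by Lemma \ref{algec} one can find a common algebraization of $H$ adapted to finitely many Nash homomorphisms at a time, but there is no uniformity over all $g\in G$, and without uniformity the translated matrix coefficients $h\mapsto\rho_H(\alpha_g(h))_{ij}$ need not even be regular on one fixed algebraization $\mathsf H$, let alone span a finite-dimensional space.

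The missing idea is to algebraize the semidirect product itself: $G\ltimes H$ is an affine Nash group (as noted in the introduction), so Theorem \ref{main30} applies to it, and taking Zariski closures of the images of $G$ and $H$ inside the resulting algebraization produces exactly the compatible algebraic action your argument needs --- at which point the function-space construction is no longer necessary, since the closure of the image of $H$ is a normal linear algebraic subgroup (Lemma \ref{alge}) with linear quotient data, and linearity of the ambient algebraization follows from the algebraic exact-sequence argument. This is in essence the paper's route: Theorem \ref{semip} is deduced as a special case of Theorem \ref{exactnash}(a), whose proof algebraizes the extension $1\rightarrow H\rightarrow G\ltimes H\rightarrow G\rightarrow 1$ via Lemma \ref{algehom} and then invokes Proposition \ref{exactalg} (for the direction needed here, the fppf-descent fact that an extension of affine by affine is affine). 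The paper's introduction explicitly warns that the theorem resists proof ``within the framework of almost linear Nash groups''; your proposal, which tries to stay inside that framework by manufacturing a representation directly from $\rho_G$, $\rho_H$ and $\alpha$, runs into precisely the obstruction the authors had in mind, and repairing it forces you back to the paper's strategy of passing through the broader category of affine Nash groups.
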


Theorem \ref{semip} is  basic to the structure theory of almost linear Nash groups, as developed in \cite{Sun}. The authors' original motivation of this note is just to provide a proof of Theorem \ref{semip}.  As an analogue of Theorem \ref{semip} for linear algebraic groups, we know that the semi-direct product of two linear algebraic groups is still a linear algebraic group. This is proved by the reason that an algebraic group is linear if and only if it is affine as an algebraic variety.  But affine Nash groups are not necessarily almost linear. Therefore the aforementioned simple reason for linear algebraic groups does not prove Theorem \ref{semip}. It seems to the authors that Theorem \ref{semip} is not easy to prove within the framework of \cite{Sun} for almost linear Nash groups, and it is necessary to go to the broader setting of affine Nash groups.

  \section{Preliminaries on algebraic groups}
 For later use, we recall some well-known facts concerning algebraic groups in this section. The reader is referred to \cite{Mil2} for more details. Let $\rk$ be a field.
  As usual, an algebraic variety over $\rk$ is defined to be a separated, geometrically reduced scheme over $\rk$ of finite type. Let $\mathsf G$ be an algebraic group over $\rk$,
  namely a group object in the category of algebraic varieties over $\rk$. Then $\mathsf G$ is automatically smooth over $\rk$. Recall that when $\rk$ has characteristic zero,
  every group scheme over $\rk$ is geometrically reduced (\cf \cite[Section V.3, Corollary 3.9]{Per}).
   Let $\mathsf H$ be an algebraic subgroup of $\mathsf G$, namely,  a geometrically reduced closed subgroup
scheme of $\mathsf G$.

\begin{prp}\label{quoa}
   The fppf quotient  $\mathsf G/ \mathsf H$ is represented by a smooth algebraic variety over $\rk$.
  \end{prp}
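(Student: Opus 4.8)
The plan is to realize the quotient concretely as a $\mathsf G$-orbit inside a projective space, following the classical construction for algebraic groups (\cf \cite{Mil2}). The first step is to invoke Chevalley's theorem on finite-dimensional representations: there is a finite-dimensional representation $\rho\colon \mathsf G\to \GL(V)$ over $\rk$ together with a line $L\subset V$ whose scheme-theoretic stabilizer $\Stab_{\mathsf G}(L)$ is exactly $\mathsf H$. Concretely, one first produces a subspace $W\subseteq V$ with $\mathsf H=\Stab_{\mathsf G}(W)$, using that the ideal of $\mathsf H$ is finitely generated and that the $\mathsf G$-translates of a finite generating set span a finite-dimensional $\mathsf G$-stable subspace; one then replaces $V$ by $\wedge^{d}V$ and $W$ by the line $\wedge^{d}W$, where $d=\dim W$. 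This reduction to a line does not change the stabilizer because $\mathsf H$ is geometrically reduced, hence smooth (in particular, in characteristic zero every algebraic subgroup is smooth, so this is automatic).

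Next I would pass to the projective space $\BP(V)$ and consider the point $x_0=[L]$. The orbit map $\phi\colon \mathsf G\to \BP(V)$, $g\mapsto \rho(g)\cdot x_0$, has scheme-theoretic fiber over $x_0$ equal to $\mathsf H$ by the defining property of $L$. By the general theory of actions of algebraic groups, the set-theoretic orbit $\mathsf G\cdot x_0$ is locally closed in $\BP(V)$; I endow it with its reduced induced structure and call it $\mathsf O$, a quasi-projective variety on which $\mathsf G$ acts transitively over $\ov{\rk}$. The orbit map then factors as a surjection $\mathsf G\twoheadrightarrow \mathsf O$ followed by the immersion $\mathsf O\hookrightarrow \BP(V)$.

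The heart of the argument is to show that $\mathsf G\to \mathsf O$ is faithfully flat with all fibers isomorphic to cosets of $\mathsf H$, so that $\mathsf O$ represents the fppf quotient sheaf $\mathsf G/\mathsf H$. Flatness at one point follows from generic flatness, and homogeneity, obtained by translating by $\mathsf G(\ov{\rk})$, propagates it to every point; together with surjectivity and the coset description of the fibers this identifies $\mathsf G\to \mathsf O$ as an $\mathsf H$-torsor for the fppf topology. Faithfully flat descent then yields the universal property: any fppf sheaf morphism $\mathsf G\to \mathsf T$ invariant under right translation by $\mathsf H$ factors uniquely through $\mathsf O$. Hence $\mathsf O\cong \mathsf G/\mathsf H$ as fppf sheaves, and $\mathsf G/\mathsf H$ is represented by the quasi-projective variety $\mathsf O$.

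Finally, smoothness of $\mathsf O$ can be deduced either from its construction as a reduced orbit under the smooth group $\mathsf G$ acting transitively on geometric points, so that all local rings at closed points are regular by translation, or, more structurally, from the fact that $\mathsf G\to \mathsf O$ is faithfully flat and even smooth (being a torsor under the smooth group $\mathsf H$), so that smoothness of $\mathsf O$ over $\rk$ descends from that of $\mathsf G$. I expect the main obstacle to be the flatness and descent step: verifying that the orbit map is genuinely an fppf $\mathsf H$-torsor, and hence that $\mathsf O$ solves the sheaf-theoretic quotient problem rather than merely being the geometric orbit. The reduction to a line stabilizer via Chevalley's theorem is standard but also requires care to control the scheme structure of $\mathsf H$.
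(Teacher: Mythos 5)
There is a genuine gap, and it is at the very first step: your construction presupposes that $\mathsf G$ is affine, whereas the proposition is stated (and is needed in this paper) for an \emph{arbitrary} algebraic group over $\rk$. The version of Chevalley's theorem you invoke --- a finite-dimensional representation $\rho\colon \mathsf G\to \GL(V)$ and a line $L$ with $\Stab_{\mathsf G}(L)=\mathsf H$ --- is proved by letting $\mathsf G$ act on its coordinate ring $\CO(\mathsf G)$ and extracting a finite-dimensional $\mathsf G$-stable subspace containing generators of the ideal of $\mathsf H$; this makes sense only when $\mathsf G$ is an affine scheme with a faithful regular representation. Here $\mathsf G$ is a group object in the category of varieties over $\rk$, and the whole point of the paper is that the relevant groups need not be linear: the proposition is applied to algebraizations of affine Nash groups whose Chevalley quotient is an abelian variety. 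Concretely, take $\mathsf G$ an abelian variety and $\mathsf H$ a nontrivial finite reduced subgroup. Then $\mathsf G/\mathsf H$ exists (it is again an abelian variety), but every homomorphism $\mathsf G\to \GL(V)$ is trivial (compare Lemma \ref{disj}), so no pair $(\rho,L)$ with scheme-theoretic stabilizer $\mathsf H$ exists and your orbit construction cannot even begin. The paper's own proof is a citation to \cite[Exp VIA, Thm 3.2]{Gr}, where representability of the fppf quotient is established for general group schemes of finite type over a field by quite different, descent-theoretic methods; reducing the general case to the affine case is not a routine matter.

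For affine $\mathsf G$ your argument is essentially the standard and correct one: local closedness of the orbit, generic flatness propagated by homogeneity, the identification of $\mathsf G\to \mathsf O$ as an fppf $\mathsf H$-torsor, the universal property via faithfully flat descent, and smoothness of $\mathsf O$ descending along the torsor are all sound, and this is the proof one finds in \cite{Mil2} for linear groups. One small correction even there: the passage from the subspace $W$ to the line $\wedge^{d}W$ preserves the scheme-theoretic stabilizer by a linear-algebra argument valid on $R$-points for every $\rk$-algebra $R$ (using that $W_R$ is a direct summand), not because $\mathsf H$ is geometrically reduced; smoothness of $\mathsf H$ plays no role at that step. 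But none of this repairs the non-affine case, which is precisely the case the paper needs for its applications (Corollary \ref{maincr} and Proposition \ref{quotient0}).
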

  \begin{proof}
  This is proved in \cite[Exp VIA, Thm 3.2]{Gr}
  \end{proof}

 Recall that a fppf quotient is a universal geometric quotient, and a universal geometric quotient is a universal categorical quotient (\cf \cite[Chapter IV]{GM}).
 Specifically, the quotient $\mathsf G/ \mathsf H$ of Proposition \ref{quoa} is a universal categorical quotient as well as a universal geometric quotient. When $\mathsf H$ is normal in $\mathsf G$, the quotient $\mathsf G/ \mathsf H$ is naturally an algebraic group over $\rk$.

  Recall that $\mathsf G$ is said to be linear if it is isomorphic to a closed algebraic subgroup of the general linear group $\GL(n)_{/\rk}$, for some $n\geq 0$.
  It is well know that $\mathsf G$ is linear if and only if it is affine (\cf \cite[Theorem 3.4]{Wat}).  We say that $\mathsf G$ is an abelian variety if it is complete and connected.
  When this is the case, $\mathsf G$ is commutative, and is projective as a variety. Note that every connected algebraic group over $\rk$ is geometrically connected (\cf Lemma 32.5.14 of the stacks project).

  The following fundamental result is due to Chevalley and is known as Chevalley's theorem.

  \begin{thmp}\label{mainch} (\cf \cite[Theorem 1.1]{Con})
Assume that $\rk$ is a perfect field, and $\mathsf G$ is connected. Then there exists a unique connected, normal, linear,  algebraic subgroup $\mathsf L$ of $\mathsf G$ such that the quotient $\mathsf G/ \mathsf L$ is an abelian variety.
\end{thmp}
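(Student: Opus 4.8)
The plan is to establish uniqueness first, since it is elementary, and only then to construct $\mathsf L$. Suppose $\mathsf L_1$ and $\mathsf L_2$ are both connected, normal, linear algebraic subgroups of $\mathsf G$ with each $\mathsf G/\mathsf L_i$ an abelian variety. The image of $\mathsf L_1$ under the quotient homomorphism $\mathsf G \to \mathsf G/\mathsf L_2$ is a closed subgroup that is simultaneously a quotient of the linear group $\mathsf L_1$ — hence affine — and a connected subgroup of an abelian variety — hence complete. A connected algebraic group that is both affine and complete is trivial, so $\mathsf L_1 \subseteq \mathsf L_2$; by symmetry $\mathsf L_1 = \mathsf L_2$. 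The same observation shows that the subgroup $\mathsf L$, once produced, is automatically the \emph{largest} connected normal linear subgroup of $\mathsf G$, which is a convenient characterization to keep in mind.

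Next I would reduce the existence statement to the case $\rk = \ov{\rk}$. This is exactly where perfectness is needed: having built $\mathsf L$ over $\ov{\rk}$, its uniqueness forces it to be stable under $\Gal(\ov{\rk}/\rk)$, and Galois descent for closed subgroups of $\mathsf G_{\ov{\rk}}$ then yields a connected normal linear subgroup defined over $\rk$ with abelian variety quotient. The perfectness guarantees that $\mathsf G_{\ov{\rk}}$ is reduced and that the relevant Galois-invariant closed subvariety descends, so this reduction is harmless.

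The heart of the argument — and the step I expect to be the main obstacle — is the construction of $\mathsf L$ over an algebraically closed field. Here I would compactify: embed $\mathsf G$ as a dense open subvariety of a normal projective variety $X$ on which the left translation action of $\mathsf G$ extends to a genuine (not merely rational) action. The complement $X \setminus \mathsf G$, or its pure-codimension-one part, is a $\mathsf G$-stable divisor $D$, and after passing to a suitable positive multiple the line bundle $\mathcal O_X(D)$ can be $\mathsf G$-linearized. The resulting linear action of $\mathsf G$ on the finite-dimensional space $V = H^0(X, \mathcal O_X(D))$ gives a representation $\rho\colon \mathsf G \to \GL(V)$ together with a $\mathsf G$-equivariant rational map $X \dashrightarrow \BP(V)$, and a careful analysis of this map is what separates the affine directions of $\mathsf G$ from the complete directions.

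From this data I would extract the abelian variety quotient. The idea is that, restricted to $\mathsf G$, the equivariant rational map to projective space factors through a morphism to an abelian variety $A$, using the fundamental fact that a rational map from a smooth variety to an abelian variety is everywhere defined (abelian varieties contain no rational curves); one then takes $\mathsf L$ to be the identity component of $\Ker(\mathsf G \to A)$ and verifies that it is affine, connected, and normal with $\mathsf G/\mathsf L$ an abelian variety, invoking Proposition \ref{quoa} for the quotient. The genuine difficulties are all geometric: producing an honest $\mathsf G$-action on a normal projective compactification (only a rational action is available a priori, and in positive characteristic one cannot invoke resolution of singularities to smooth things out), the $\mathsf G$-linearization of the boundary line bundle, and the proof that the quotient is proper. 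These are precisely the technical points carried out in \cite{Con}, whose execution I would follow rather than reproduce the birational group-law machinery of the classical Barsotti--Chevalley--Rosenlicht arguments.
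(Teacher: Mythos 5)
Your proposal is in effect the same as the paper's: the paper offers no proof of Theorem \ref{mainch} beyond the citation to \cite[Theorem 1.1]{Con}, and your outline — uniqueness via the affine-versus-complete dichotomy, Galois descent from $\ov{\rk}$ (which is exactly where perfectness enters), and the compactification/boundary-divisor construction over an algebraically closed field — is a faithful summary of that cited proof, with the genuinely hard geometric steps (honest action on a normal projective compactification, linearization of the boundary bundle, properness of the quotient) correctly identified and explicitly deferred to \cite{Con}. Since you rely on the same reference for the same technical core, this is essentially the paper's approach, carried out in somewhat more detail.
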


 Chevalley's theorem has the following consequence.

\begin{corp}\label{maincr}
The smooth algebraic variety $\mathsf G/ \mathsf H$ is quasi-projective.
\end{corp}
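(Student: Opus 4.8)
The plan is to deduce quasi-projectivity from the structure theorem by fibering $\mathsf G/\mathsf H$ over an abelian variety whose fibers are quotients of a linear group, and then to manufacture an ample line bundle on the total space. Throughout I retain the standing hypotheses of Theorem~\ref{mainch}, so $\rk$ is perfect and $\mathsf G$ is connected; the general case reduces to this because $\mathsf G^0\mathsf H$ is an open and closed subgroup, so that the projection $\mathsf G/\mathsf H\to \mathsf G/\mathsf G^0\mathsf H$ decomposes $\mathsf G/\mathsf H$ (after a finite \'etale base change) into finitely many quotients of the connected group $\mathsf G^0$, and a finite disjoint union of quasi-projective varieties is quasi-projective. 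Applying Theorem~\ref{mainch} I obtain a connected, normal, linear subgroup $\mathsf L\trianglelefteq \mathsf G$ with $\pi\colon \mathsf G\to \mathsf A:=\mathsf G/\mathsf L$ an abelian variety, which is projective. Put $\mathsf B:=\pi(\mathsf H)$, a closed subgroup of $\mathsf A$; then $\mathsf L\mathsf H=\pi^{-1}(\mathsf B)$ is a closed subgroup of $\mathsf G$, and by Proposition~\ref{quoa} together with the universal property of the quotient one gets $\mathsf G/(\mathsf L\mathsf H)\cong \mathsf A/\mathsf B$, which is again a projective abelian variety.

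First I would treat the linear case: for a linear algebraic group any quotient by a closed subgroup is quasi-projective. Here I would invoke the representation-theoretic form of Chevalley's theorem to produce a finite-dimensional representation $V$ of $\mathsf L$ together with a line $\ell\subseteq V$ whose stabilizer is $\mathsf L\cap \mathsf H$; the orbit map then realizes $\mathsf L/(\mathsf L\cap \mathsf H)$ as a locally closed subvariety of $\mathbb{P}(V)$, hence as a quasi-projective variety carrying the restriction of the ample sheaf $\mathcal O_{\mathbb{P}(V)}(1)$. Since the natural morphism $q\colon \mathsf G/\mathsf H\to \mathsf A/\mathsf B$ has all its fibers isomorphic to $\mathsf L/(\mathsf L\cap \mathsf H)$, every fiber of $q$ is quasi-projective.

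Finally I would assemble the pieces. Because $\mathsf A/\mathsf B$ is projective, it suffices to show that $q$ is a quasi-projective morphism, i.e. that it admits a $q$-ample invertible sheaf $\mathcal L$: for then $\mathcal L\otimes q^{\ast}\mathcal M^{\otimes n}$ is ample on $\mathsf G/\mathsf H$ for an ample $\mathcal M$ on $\mathsf A/\mathsf B$ and $n\gg 0$ (EGA~II, 4.6.13), which exhibits $\mathsf G/\mathsf H$ as quasi-projective. \textbf{The hard part is exactly the construction of such an $\mathcal L$}: fiberwise quasi-projectivity does not by itself yield a relatively ample sheaf, so one must descend the ample sheaves on the fibers to a single relatively ample sheaf along the fibration $q$. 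Concretely, $\mathsf G\to \mathsf A/\mathsf B$ is an $\mathsf L\mathsf H$-torsor and $\mathsf G/\mathsf H=\mathsf G\times^{\mathsf L\mathsf H}(\mathsf L\mathsf H/\mathsf H)$ is the associated bundle, so the task is to upgrade the $\mathsf L$-equivariant sheaf $\mathcal O_{\mathbb{P}(V)}(1)$ on the fiber to an $\mathsf L\mathsf H$-equivariant ample sheaf and then apply fppf descent; the obstruction is that $\mathsf L\mathsf H$ is in general not affine, so the required linearization must be arranged by hand. I expect this descent to be the only genuinely delicate point. As a fallback one may instead cite the general fact that every homogeneous space of an algebraic group over a field is quasi-projective, of which the fibration above over an abelian variety is precisely the geometric core.
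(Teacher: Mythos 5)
Your overall strategy---fibering $\mathsf G/\mathsf H$ over the abelian variety $\mathsf A/\mathsf B$ supplied by Theorem~\ref{mainch}, handling the fibers $\mathsf L/(\mathsf L\cap\mathsf H)$ via Chevalley's line-stabilizer theorem, and assembling with a relatively ample sheaf---is the natural one, and it runs parallel to the argument the paper invokes (the paper simply cites the proof of \cite[Corollary 1.2]{Con} and asserts it adapts, ``with a slight modification,'' from $\mathsf G$ to $\mathsf G/\mathsf H$). But your write-up has a genuine gap, and you flag it yourself: the $q$-ample invertible sheaf on $q\colon \mathsf G/\mathsf H\to\mathsf A/\mathsf B$ is never constructed, and this step is not routine---it is where the entire content of the corollary sits. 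In Conrad's proof for $\mathsf G$ itself, the decisive point is that $\mathsf G\to\mathsf A$ is an \emph{affine} morphism (an $\mathsf L$-torsor with $\mathsf L$ affine), so the pullback of an ample sheaf from the projective variety $\mathsf A$ is already ample and no relative ampleness has to be manufactured. For your $q$ that mechanism is unavailable: the fiber $\mathsf L/(\mathsf L\cap\mathsf H)$ can be a projective flag variety, so $q$ is not affine, and your proposed fppf descent along the $\mathsf L\mathsf H$-torsor $\mathsf G\to\mathsf A/\mathsf B$ requires an $\mathsf L\mathsf H$-equivariant structure extending the $\mathsf L$-action on $\mathbb{P}(V)$. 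This can genuinely fail at the linear level: when the extension of $\mathsf B$ by $\mathsf L$ inside $\mathsf L\mathsf H$ is twisted (anti-affine phenomena), every linear representation of $\mathsf L\mathsf H$ is trivial on the anti-affine part, so the given $\mathsf L$-module $V$ need not extend, and one must instead produce a projective action, possibly after replacing $V$, together with a cocycle analysis. That is precisely the substance of the Chow--Raynaud theorem on quasi-projectivity of homogeneous spaces; a proof that stops at ``I expect this descent to be the only genuinely delicate point'' omits exactly what was to be proved.

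Your fallback---``cite the general fact that every homogeneous space of an algebraic group over a field is quasi-projective''---is verbatim the statement of Corollary~\ref{maincr}, hence circular as written; it becomes legitimate only if converted into an actual reference (Chow's 1957 theorem, Raynaud's Lecture Notes 119, or Conrad's argument, which is what the paper itself does), at which point your proof reduces to a citation. Two further glossed points: the reduction to connected $\mathsf G$ ``after a finite \'etale base change'' needs the (true but not free) fact that the quotient of a quasi-projective scheme by a free action of a finite group is again quasi-projective, since quasi-projectivity does not descend along arbitrary finite \'etale covers without such an argument; and Theorem~\ref{mainch} requires $\rk$ perfect, so in the stated generality you must also descend quasi-projectivity along $\rk\subseteq\rk^{\mathrm{perf}}$, a step you do not address and which Conrad treats with care. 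The pieces you do establish---$\mathsf B:=\pi(\mathsf H)$ closed, $\mathsf G/\mathsf L\mathsf H\cong\mathsf A/\mathsf B$ projective via Proposition~\ref{quoa}, quasi-projectivity of $\mathsf L/(\mathsf L\cap\mathsf H)$ by a line stabilizer, and the assembly principle from EGA~II---are all correct; the proof is simply incomplete at its crux.
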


 \begin{proof}
 It is proved in \cite[Corollary 1.2]{Con}  that $\mathsf G$ is quasi-projective. The same proof (with a slight modification)  shows that $\mathsf G/ \mathsf H$ is also quasi-projective.
  \end{proof}

Recall the following  well-known result.
  \begin{lemp}\label{disj}
  Let $\mathsf G_1$ be a connected linear algebraic group, and let $\mathsf G_2$ be an abelian variety, both defined over $\rk$.
  Then there is no non-trivial algebraic homomorphism from $\mathsf G_1$ to $\mathsf G_2$, and no non-trivial algebraic homomorphism from $\mathsf G_2$ to $\mathsf G_1$.
  \end{lemp}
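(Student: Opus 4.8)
The plan is to derive both non-existence statements from a single principle: the image of a homomorphism of algebraic groups is a closed algebraic subgroup of the target, and a connected algebraic variety that is simultaneously affine and complete must reduce to a single point. The latter holds because the coordinate ring of a complete (geometrically connected, geometrically reduced) variety is just $\rk$, whereas the coordinate ring of an affine variety separates its points; thus an affine complete variety is finite, and being connected it is a point. Recall from the discussion above that $\mathsf G_1$, being linear, is affine, and that the abelian variety $\mathsf G_2$ is projective, hence complete.

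First I would treat a homomorphism $\phi \colon \mathsf G_1 \to \mathsf G_2$. Its image $\phi(\mathsf G_1)$ is a closed algebraic subgroup of $\mathsf G_2$, isomorphic to the quotient $\mathsf G_1/\ker\phi$. Since $\mathsf G_1$ is affine, this quotient is again linear (a quotient of a linear algebraic group is linear), so $\phi(\mathsf G_1)$ is affine. On the other hand, as a closed subvariety of the complete variety $\mathsf G_2$, the image $\phi(\mathsf G_1)$ is complete; and being the image of the connected group $\mathsf G_1$, it is connected. By the principle above it is a single point, so $\phi$ is trivial.

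Next I would treat a homomorphism $\psi \colon \mathsf G_2 \to \mathsf G_1$. Here the image $\psi(\mathsf G_2)$ is a closed subgroup of $\mathsf G_1$; as a closed subvariety of the affine variety $\mathsf G_1$ it is affine, while as the image of the complete variety $\mathsf G_2$ under a morphism it is complete. It is also connected. The same principle forces $\psi(\mathsf G_2)$ to be trivial, and hence $\psi$ is trivial. Note that this direction is in fact the cleaner of the two, since affineness of the image comes for free from its being closed in $\mathsf G_1$, with no appeal to the quotient construction.

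The only genuinely delicate points are the two structural inputs feeding the common principle: that the image of an algebraic group homomorphism is a closed subgroup (rather than merely a constructible subset), and that a quotient of a linear algebraic group remains linear, which is what supplies affineness of the image in the first case. Both are standard facts about algebraic groups over a field, and the completeness assertions are immediate from the stability of completeness under closed immersions and under pushforward along morphisms. Granting these, the lemma follows with no further computation.
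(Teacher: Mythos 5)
Your proof is correct, but it takes a genuinely different route from the paper, at least for the direction $\mathsf G_1\to\mathsf G_2$. The paper simply cites \cite[Lemma 2.3]{Con} for that direction (Conrad's argument is of a different flavor, resting on the structure of connected affine groups and the absence of rational curves on abelian varieties), and for the direction $\mathsf G_2\to\mathsf G_1$ it uses the constancy of regular functions on a geometrically connected complete variety --- which is essentially your second argument in a different guise, since composing $\psi$ with coordinate functions on an affine embedding of $\mathsf G_1$ is the same principle as saying the closed image is affine, complete and connected, hence a point. What your unified treatment buys is symmetry and self-containedness; what it costs is the one genuinely nontrivial input you correctly isolate, namely that the (fppf) quotient of an affine algebraic group by a normal closed subgroup scheme is again affine, so that $\phi(\mathsf G_1)\cong \mathsf G_1/\ker\phi$ is linear. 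Two cautions here. First, within this paper that fact must be imported from the literature (e.g.\ \cite{Gr} or \cite{Mil2}) and \emph{not} from Proposition \ref{exactalg}(a): the paper proves the ``$\mathsf G$ linear $\Rightarrow \mathsf G''$ linear'' half of that proposition \emph{using} Lemma \ref{disj} together with Chevalley's theorem, so deducing the lemma from the proposition would be circular in the paper's logical architecture; your phrasing, which treats quotient-linearity as a standard external fact, avoids this. Second, since $\rk$ is arbitrary, in positive characteristic $\ker\phi$ need not be smooth, so the quotient must be understood scheme-theoretically, and your ``connected affine complete $\Rightarrow$ point'' principle needs the image to be geometrically connected and geometrically reduced --- both hold, since the image contains the identity $\rk$-point and is a quotient of a smooth group (the paper itself records that connected algebraic groups over $\rk$ are geometrically connected). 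With those standard facts granted, your argument is complete and arguably cleaner than stitching together two separate citations.
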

\begin{proof}
It is proved in \cite[Lemma 2.3]{Con} that there is no non-trivial algebraic homomorphism from $\mathsf G_1$ to $\mathsf G_2$. Since every regular function on a geometrically connected complete algebraic variety is constant, there is no non-trivial algebraic homomorphism from $\mathsf G_2$ to $\mathsf G_1$.
\end{proof}

  \begin{prp}\label{exactalg}
  Let
\[
  1\rightarrow \mathsf G'\stackrel{i}{\rightarrow} \mathsf G\stackrel{\pi}{\rightarrow} \mathsf G''\rightarrow 1
\]
 be a sequence of algebraic homomorphisms of algebraic groups over
$\rk$. Assume that it is exact, namely, $\pi$ is faithfully flat,
and $i$ induces an isomorphism from $\mathsf G'$ onto the
scheme-theoretic kernel of $\pi$.  Then the followings hold true.

(a) The algebraic group $\mathsf G$  is linear if and only if both $\mathsf G'$ and $\mathsf G''$ are so.

(b) The algebraic group $\mathsf G$  is complete if and only if both $\mathsf G'$ and $\mathsf G''$ are so.

\end{prp}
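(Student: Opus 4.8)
The plan is to prove both parts of Proposition \ref{exactalg} by reducing questions about $\mathsf G$ to the corresponding questions about the subgroup $\mathsf G'$ and the quotient $\mathsf G''$, exploiting the exactness of the sequence together with the characterizations ``linear $\Leftrightarrow$ affine'' and the structure of complete varieties. Throughout I would freely use that $\mathsf G' \cong \mathsf G'' $-fibered situation: $\pi\colon \mathsf G \to \mathsf G''$ is faithfully flat with fibers isomorphic to $\mathsf G'$, so $\mathsf G$ is an $\mathsf G'$-torsor over $\mathsf G''$ (locally trivial for the fppf topology).

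For part (a), one direction is immediate: if $\mathsf G$ is linear, then it is affine, hence its closed subgroup scheme $\mathsf G' \cong \Ker(\pi)$ is affine and therefore linear. For $\mathsf G''$, since $\pi$ is faithfully flat and $\mathsf G$ is affine, $\mathsf G''$ is affine by faithfully flat descent of affineness (a faithfully flat affine morphism with affine source has affine target when the target is a quotient of this form; more directly, $\mathsf G \to \mathsf G''$ being faithfully flat lets one descend the affine scheme structure). Conversely, suppose both $\mathsf G'$ and $\mathsf G''$ are linear, hence affine. Then $\pi\colon \mathsf G \to \mathsf G''$ is an affine morphism (being an fppf $\mathsf G'$-torsor with affine fiber $\mathsf G'$, affineness of the torsor over the base follows from fppf descent since $\mathsf G'$ is affine), and composing an affine morphism with the affine structure morphism of $\mathsf G''$ shows $\mathsf G$ is affine, hence linear. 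I would cite \cite[Theorem 3.4]{Wat} for the equivalence of linear and affine at each step.

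For part (b), I would use the valuative/projectivity characterization of completeness together with Lemma \ref{disj} where helpful, but the cleanest route is again via the torsor structure. If $\mathsf G$ is complete, then $\mathsf G''$, being the image of a complete variety under the morphism $\pi$, is complete (the image of a complete variety under a morphism is closed and complete), and $\mathsf G'$, being a closed subscheme of the complete variety $\mathsf G$, is complete. Conversely, if both $\mathsf G'$ and $\mathsf G''$ are complete, then since $\pi$ is proper (it is faithfully flat of finite type with complete fibers $\mathsf G'$ over the complete base $\mathsf G''$; properness can be checked fppf-locally on the base, where the torsor trivializes to $\mathsf G' \times (\text{base})$, manifestly proper), and the composite of the proper morphism $\pi$ with the structure morphism $\mathsf G'' \to \Spec \rk$ of the complete variety $\mathsf G''$ is proper, we conclude $\mathsf G \to \Spec \rk$ is proper, i.e.\ $\mathsf G$ is complete.

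The main obstacle in both parts is the converse direction, where one must propagate a property (affineness in (a), properness in (b)) of the fiber $\mathsf G'$ across the faithfully flat morphism $\pi$ to obtain the corresponding property of $\pi$ itself. The key technical input is that $\mathsf G$ is an fppf $\mathsf G'$-torsor over $\mathsf G''$, so after a faithfully flat base change $\mathsf G'' \to \mathsf G''$ the pullback of $\pi$ becomes the trivial projection $\mathsf G' \times \mathsf G'' \to \mathsf G''$, which is affine (resp.\ proper) precisely when $\mathsf G'$ is; since both affineness and properness of morphisms satisfy fppf descent, $\pi$ itself is affine (resp.\ proper). I expect this descent step to be where care is needed, and I would invoke the standard fppf descent results for affine and proper morphisms (e.g.\ from \cite{Gr} or the stacks project) to make it rigorous.
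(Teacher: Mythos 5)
Most of your proposal coincides with the paper's proof: the easy passage to $\mathsf G'$, the observation that $\mathsf G\times_{\mathsf G''}\mathsf G\cong \mathsf G'\times \mathsf G$ (the paper's Cartesian square; note your base change should be along $\pi\colon \mathsf G\to\mathsf G''$ itself, not ``$\mathsf G''\to\mathsf G''$''), fppf descent of affineness and properness of \emph{morphisms} to conclude that $\pi$ is affine (resp.\ proper) when $\mathsf G'$ is affine (resp.\ complete), and hence that $\mathsf G$ inherits the property from $\mathsf G'$ and $\mathsf G''$; likewise completeness of $\mathsf G''$ as the image of the complete $\mathsf G$ under the surjection $\pi$. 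All of part (b) and three of the four implications in part (a) are correct and essentially identical to the paper.

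The genuine gap is the remaining implication of (a): $\mathsf G$ linear $\Rightarrow$ $\mathsf G''$ linear. Your claimed principle --- that a faithfully flat morphism with affine source has affine target, i.e.\ that affineness descends from the total space to the base --- is false. For example, let $X=\mathbb{A}^1\sqcup\mathbb{A}^1\to\mathbb{P}^1$ be the cover by the two standard affine opens: this morphism is faithfully flat, of finite presentation, quasi-compact, and even an \emph{affine} morphism, with affine source and non-affine target. Nor does the torsor structure rescue the argument: there exist free $\mathbb{G}_a$-actions on affine space (Winkelmann's examples) whose quotients are quasi-affine but not affine, so a torsor with affine total space and affine structure group can have non-affine base. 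That the quotient of an affine algebraic group by a normal algebraic subgroup is again affine is a true theorem, but it requires genuine group-theoretic input, which is exactly what the paper supplies: reduce to $\rk$ algebraically closed and $\mathsf G$ connected; for any abelian variety $\mathsf A$, the faithfully flat $\pi$ induces an injection $\Hom(\mathsf G'',\mathsf A)\to\Hom(\mathsf G,\mathsf A)$, and the latter group is trivial by Lemma \ref{disj} since $\mathsf G$ is connected and linear; hence $\Hom(\mathsf G'',\mathsf A)$ is trivial for every $\mathsf A$, and Chevalley's theorem (Theorem \ref{mainch}) forces the abelian-variety quotient of $\mathsf G''$ to be trivial, i.e.\ $\mathsf G''$ is linear. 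You would need this argument (or an equivalent one, e.g.\ the Hopf-algebraic or representation-theoretic construction of quotients as in \cite{Wat} or \cite{Mil2}) to close the gap; note the slightly ironic structure of the paper's proof, in which Chevalley's theorem for algebraic groups is used to establish the very proposition that later feeds into the Nash-group analogue of Chevalley's theorem.
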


\begin{proof}

This is also well known. We sketch a proof for the lack of reference. It is obvious that if $\mathsf G$ is linear, then $\mathsf G'$ is linear, and if $\mathsf G$ is complete, then $\mathsf G'$ is complete.

Note that the diagram
\[
   \begin{CD}
          \mathsf G'\times_{\Spec \rk} \mathsf G @>(x,y)\mapsto xy >> \mathsf G\\
            @V (x,y)\mapsto y VV           @VV \pi V\\
            \mathsf G @>\pi>> \mathsf G''
  \end{CD}
\]
is Cartesian. Since being affine (or proper) is local on the base for the fppf topology, we know that if $G'$ is affine or complete, then $\pi$ is affine or proper, respectively. Therefore, if both $\mathsf G'$ and $\mathsf G''$ are affine (or complete), then so is $\mathsf G$.

If $\mathsf G$ is complete, then $\mathsf G''$ is also complete since $\pi$ is surjective (\cf \cite[Property 7.6]{Mil}).

Now assume that $\mathsf G$ is linear, and it remains to show that $\mathsf G''$ is also linear. Without loss of generality we assume that $\rk$ is algebraically closed and $\mathsf G$ is connected. Then $\mathsf G''$ is also connected. Let $\mathsf A$ be an abelian variety over $\rk$. Then $\pi$ induces an injective homomorphism
\[
  \Hom(\mathsf G'', A)\rightarrow \Hom(\mathsf G, A).
\]
By Lemma \ref{disj}, the group $\Hom(\mathsf G, A)$ is trivial. Therefore $\Hom(\mathsf G'', A)$ is also trivial. Since $\mathsf A$ is arbitrary, Chevalley's theorem implies that $\mathsf G''$ is linear.
\end{proof}

We remark that an algebraic homomorphism of algebraic groups is fully faithful if and only if it is surjective (\cf \cite[Fact 6.30]{Mil2}). Moreover, with the notation as in Proposition \ref{exactalg}, $\pi$ induces an isomorphism form $\mathsf G/\mathsf G'$ onto $\mathsf G''$ (\cf \cite[Theorem 6.27]{Mil2}).

We will also need the following elementary lemma.

\begin{lemp}\label{open}
Let $\mathsf G$ be an algebraic group over $\rk$. Let $\mathsf G_0$ be an open algebaic subgroup of $\mathsf G$. Then $G$ is linear (or complete) if and only if $\mathsf G_0$ is so.
\end{lemp}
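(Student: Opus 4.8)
The plan is to reduce both equivalences to Proposition \ref{exactalg}, applied not to the inclusion $\mathsf G_0 \hookrightarrow \mathsf G$ directly but to the identity component $\mathsf G^\circ$, which is a normal open subgroup common to $\mathsf G$ and $\mathsf G_0$.

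First I would record the elementary topological facts. Since $\mathsf G_0$ is open, each of its cosets is open, so the complement of $\mathsf G_0$ is a union of open cosets; hence $\mathsf G_0$ is also closed. As $\mathsf G$ is of finite type over $\rk$ it has only finitely many connected components, so $\mathsf G_0$ has finite index in $\mathsf G$. Being clopen and containing the identity, $\mathsf G_0$ contains the whole identity component $\mathsf G^\circ$; and since $\mathsf G^\circ$ is a connected component of $\mathsf G$ lying inside $\mathsf G_0$, it is also the identity component $(\mathsf G_0)^\circ$ of $\mathsf G_0$. Thus $\mathsf G^\circ$ is a normal, open, finite-index algebraic subgroup of both $\mathsf G$ and $\mathsf G_0$.

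Next I would form the two short exact sequences
\[
1 \to \mathsf G^\circ \to \mathsf G \to \mathsf G/\mathsf G^\circ \to 1, \qquad 1 \to \mathsf G^\circ \to \mathsf G_0 \to \mathsf G_0/\mathsf G^\circ \to 1,
\]
each of which satisfies the hypotheses of Proposition \ref{exactalg}: by Proposition \ref{quoa} and the remarks following it, the quotient maps are faithfully flat with scheme-theoretic kernel $\mathsf G^\circ$. The component groups $\mathsf G/\mathsf G^\circ$ and $\mathsf G_0/\mathsf G^\circ$ are finite group schemes over $\rk$; being spectra of finite-dimensional $\rk$-algebras they are affine, hence linear, and being finite over $\rk$ they are proper, hence complete. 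Therefore Proposition \ref{exactalg}(a) gives $\mathsf G$ linear $\iff$ $\mathsf G^\circ$ linear $\iff$ $\mathsf G_0$ linear, and Proposition \ref{exactalg}(b) gives $\mathsf G$ complete $\iff$ $\mathsf G^\circ$ complete $\iff$ $\mathsf G_0$ complete, which is exactly the assertion.

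The only genuine subtlety is that $\mathsf G_0$ need not be normal in $\mathsf G$, so one cannot apply Proposition \ref{exactalg} directly to the inclusion $\mathsf G_0 \hookrightarrow \mathsf G$: there is in general no algebraic group quotient $\mathsf G/\mathsf G_0$. The device that circumvents this is precisely to pass through the common identity component $\mathsf G^\circ$, which is automatically normal, so that $\mathsf G$ and $\mathsf G_0$ are each compared with $\mathsf G^\circ$ rather than with one another. (For the easy direction one does not even need this: since $\mathsf G_0$ is a closed subscheme of $\mathsf G$, it is affine whenever $\mathsf G$ is affine and proper whenever $\mathsf G$ is proper, so only the converse direction really requires the argument above.)
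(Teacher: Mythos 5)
Your proof is correct, but it takes a genuinely different route from the paper's. The paper treats the lemma as essentially immediate: affineness and completeness are insensitive to extension of the base field, so one may assume $\rk$ algebraically closed, and then $\mathsf G$ is a finite disjoint union of translates of the (automatically closed, finite-index) open subgroup $\mathsf G_0$, each translate isomorphic to $\mathsf G_0$ as a variety; hence $\mathsf G$ is affine (resp.\ proper) if and only if $\mathsf G_0$ is. Your argument instead routes both equivalences through the common identity component $\mathsf G^\circ$ and two applications of Proposition \ref{exactalg}, with the finite \'{e}tale component groups $\mathsf G/\mathsf G^\circ$ and $\mathsf G_0/\mathsf G^\circ$ supplying quotients that are simultaneously linear and complete (note that smoothness of $\mathsf G$ is what makes these component groups \'{e}tale, hence geometrically reduced, hence algebraic groups in the paper's sense --- a point worth stating since Proposition \ref{exactalg} is formulated for such groups). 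This is a clean way to sidestep the non-normality of $\mathsf G_0$, and everything you use is available: the quotient maps are faithfully flat with scheme-theoretic kernel $\mathsf G^\circ$ by Proposition \ref{quoa} and the surrounding remarks. One caution, however: in the paper's proof of Proposition \ref{exactalg}(a), the step ``without loss of generality $\mathsf G$ is connected'' is itself justified by exactly the kind of reduction Lemma \ref{open} encodes (passing from linearity of $(\mathsf G'')^\circ$ to linearity of $\mathsf G''$), so deriving Lemma \ref{open} \emph{from} Proposition \ref{exactalg} flirts with circularity at the level of the paper's global logic; the elementary translation argument is what actually grounds both statements, which is presumably why the paper proves the lemma directly rather than by your route.
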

\begin{proof}
The lemma is obvious when $\rk$ is algebraically closed. The general case is easily reduced to this case.
\end{proof}

\section{Algebraizations}

For every equi-dimensional smooth algebraic variety $\mathsf X$ over
$\R$, $\mathsf X(\R)$ and $\mathsf X(\C)$ are naturally Nash
manifolds. Likewise, for every algebraic group $\mathsf G$ over
$\R$, $\mathsf G(\R)$ and  $\mathsf G(\C)$ are naturally a Nash
groups. We introduce  the following definition.

\begin{dfnl}\label{defalgb}
Let $G$ be a Nash group. An algebraization of $G$ is an algebraic group $\mathsf G$  over $\R$, together with a Nash homomorphism $G\rightarrow \mathsf G(\R)$ which has a finite kernel and whose image is Zariski dense in $\mathsf G$.
\end{dfnl}
Here $\mathsf G(\R)$ is naturally identified with a subset of the
underlying topological space of the scheme $\mathsf G$. Similar
identifications will be used later on without further explanation.
We call the homomorphism $G\rightarrow \mathsf G(\R)$ of Definition
\ref{defalgb} the algebraization homomorphism of the algebraization.

\begin{lem}\label{nz0}
Let $\mathsf G$ be an algebraic group over $\R$. Let $G$ be a Nash
subgroup of $\mathsf G(\R)$ which is Zariski dense in $\mathsf G$.
Then $G$ is  open in the Nash group $\mathsf G(\R)$.
\end{lem}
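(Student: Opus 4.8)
The plan is to reduce the assertion to the single dimension equality $\dim G = \dim \mathsf G(\R)$ and then to extract this equality from the Zariski density hypothesis by means of the semialgebraic dimension theory.

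First, I would set up the two dimensions. As $\mathsf G$ is an algebraic group it is smooth and equidimensional, so $\mathsf G(\R)$---which is nonempty since it contains $G$---is a Nash manifold of pure dimension $n := \dim \mathsf G$. Being a Nash group, $G$ is homogeneous, hence an equidimensional Nash submanifold of $\mathsf G(\R)$; let $d := \dim G$, which equals its dimension as a semialgebraic set. The point is that once $d = n$ is known the lemma follows immediately: a locally closed Nash submanifold of $\mathsf G(\R)$ of the same dimension $n = \dim \mathsf G(\R)$ is necessarily open. (Equivalently, $G$ then contains a neighbourhood of the identity in $\mathsf G(\R)$, and left translation by the elements of the subgroup $G$ propagates this to all of $G$.) So the whole content is the equality $d = n$.

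Second, I would invoke the basic fact of semialgebraic geometry that the dimension of a semialgebraic set coincides with the dimension of its Zariski closure (\cf \cite[Proposition~2.8.2]{BCR}). Since $\mathsf G$ need not be affine, I would apply this in affine charts: fix a finite cover of $\mathsf G$ by affine open subschemes $\mathsf U_i$, with $\mathsf U_i(\R)$ realized as closed semialgebraic subsets of affine spaces. Then $\dim_{\mathrm{sa}} G = \max_i \dim_{\mathrm{sa}}(G\cap\mathsf U_i(\R))$, and for each $i$ the cited fact gives $\dim_{\mathrm{sa}}(G\cap\mathsf U_i(\R)) = \dim \overline{G\cap\mathsf U_i(\R)}$, the dimension of the Zariski closure of $G\cap\mathsf U_i(\R)$ taken inside $\mathsf U_i$. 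Because the Zariski closure of $G$ in $\mathsf G$ is the union of the Zariski closures of the finitely many pieces $G\cap\mathsf U_i(\R)$, and passing from $\mathsf U_i$ to $\mathsf G$ preserves dimensions, taking maxima yields the global identity $\dim_{\mathrm{sa}}G = \dim\overline G$, where $\overline G$ denotes the Zariski closure of $G$ in $\mathsf G$.

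Finally, the hypothesis that $G$ is Zariski dense in $\mathsf G$ says precisely that $\overline G = \mathsf G$, so $\dim \overline G = n$; combined with the previous step this gives $d = \dim_{\mathrm{sa}} G = n$, as required. I expect the main obstacle to be technical bookkeeping rather than a genuine conceptual difficulty: one must faithfully translate the scheme-theoretic density hypothesis into the real semialgebraic setting and cope with a possibly non-affine $\mathsf G$ by localizing to charts, checking that the chart-wise applications of \cite{BCR} assemble into the global equality $\dim_{\mathrm{sa}}G = \dim\overline G$. The remaining points---equidimensionality of $G$, the agreement of its manifold and semialgebraic dimensions, and the fact that a full-dimensional locally closed Nash submanifold is open---are routine.
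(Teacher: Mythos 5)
Your proposal is correct and takes essentially the same route as the paper: the paper's proof likewise reduces the lemma to the dimension equality $\dim G = \dim \mathsf G(\R)$ and obtains it from the fact that a semialgebraic set has the same dimension as its Zariski closure, citing \cite[Theorem 3.20]{Cos} where you cite \cite[Proposition 2.8.2]{BCR}. Your chart-by-chart bookkeeping for non-affine $\mathsf G$ and the explicit full-dimension-implies-open step are just details the paper's terse three-line proof leaves implicit.
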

\begin{proof}
Note that $G$ is Zariski dense in $\mathsf G(\R)$. Therefore by
\cite[Theorem 3.20]{Cos}, we know that the Nash groups $G$ and
$\mathsf G(\R)$ has the same dimension. Hence $G$ is open in
$\mathsf G(\R)$.
\end{proof}

 Lemma \ref{nz0} implies that the Lie algebra of a Nash group is identical to the Lie algebra of every algebraization of it.

Recall that  every real projective space is affine as a Nash
manifold (cf. \cite[Theorem 3.4.4]{BCR}). Therefore, by Proposition
\ref{quoa}, for every algebraic group  $\mathsf G$  over $\R$,
$\mathsf G(\R)$ is  an affine Nash group. The following result is
crucial to this note.

\begin{thml}\label{main30}
A Nash group is affine if and only if it has an algebraization.
\end{thml}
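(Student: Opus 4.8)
The plan is to prove the two implications of the equivalence separately. The implication that an algebraizable Nash group is affine is elementary given the material already assembled, whereas the reverse implication is where the structure theory of Hrushovski and Pillay is the essential input. Consider first the direction ``algebraizable $\Rightarrow$ affine''. Suppose $G$ has an algebraization, i.e. a Nash homomorphism $\phi\colon G\to\mathsf G(\R)$ with finite kernel and Zariski dense image, for some algebraic group $\mathsf G$ over $\R$. Because $\ker\phi$ is finite, $\phi$ is a local Nash diffeomorphism at every point, so its image is a Nash subgroup of $\mathsf G(\R)$ of the same dimension as $G$; being Zariski dense, $\phi(G)$ is open in $\mathsf G(\R)$ by Lemma \ref{nz0}. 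As recalled just before the statement, $\mathsf G(\R)$ is an affine Nash group (via Corollary \ref{maincr} and the affineness of real projective space), and an open Nash subgroup of an affine Nash group is again affine. Thus $\phi(G)\cong G/\ker\phi$ is affine, and $\phi$ exhibits $G$ as a finite Nash covering of it. Since a finite Nash covering of an affine Nash manifold is affine, $G$ is affine.

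For the converse, that an affine Nash group $G$ admits an algebraization, the decisive tool is the theory of groups definable in o-minimal structures. An affine Nash group is in particular a semialgebraic group, hence definable in the real closed field $\R$. The theorem of Hrushovski and Pillay then identifies the Lie algebra of $G^0$ with that of a connected real algebraic group $\mathsf G$ and provides a Nash isomorphism between a neighbourhood of the identity in $G^0$ and one in $\mathsf G(\R)$. The work is to globalize this germ to a homomorphism $G^0\to\mathsf G(\R)$: passing to universal covers one identifies $G^0$ and $\mathsf G(\R)$ with quotients of a common simply connected Nash group by the lattices $\pi_1(G^0)$ and $\pi_1(\mathsf G(\R))$, and the germ globalizes to a homomorphism with finite kernel exactly when the former lattice has finite index in the latter. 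The force of affineness is precisely that this finite-index condition holds (failure of it is exactly what excludes, e.g., the universal cover of $\SL_2(\R)$, which is correspondingly not a Nash group). Replacing $\mathsf G$ by the Zariski closure of the image then makes the image Zariski dense, giving an algebraization of $G^0$; since $G$ has only finitely many connected components, I would finally extend it to an algebraization of $G$ by enlarging $\mathsf G$ to carry representatives of $G/G^0$.

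The main obstacle is this converse direction: nothing in the raw Nash data of $G$ produces the algebraic group $\mathsf G$, and one genuinely needs the Hrushovski--Pillay recognition theorem, which both algebraizes the Lie algebra and controls the fundamental group so that the finite-kernel condition can be arranged. The remaining steps are routine but must be verified: that an open Nash subgroup and a finite Nash covering of an affine Nash manifold are affine (used in the first direction), and that passing to the Zariski closure of an image preserves finiteness of the kernel and yields a group defined over $\R$ rather than merely over $\C$ (used in the globalization step).
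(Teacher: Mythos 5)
Your ``algebraizable $\Rightarrow$ affine'' direction is correct and is exactly the paper's argument spelled out in more detail: the paper disposes of this part in one line, citing that a finite-fold cover of an affine Nash manifold is affine (\cite[Proposition III.1.7]{Sh}, \cite[Proposition 2.4]{Sun}), together with the observation, made just before the theorem, that $\mathsf G(\R)$ is an affine Nash group via Corollary \ref{maincr}; your route through Lemma \ref{nz0} and the openness of $\phi(G)$ fills in the same picture (one small point to note is that $\phi(G)$, being a semialgebraic subgroup, is indeed a Nash subgroup, so that Lemma \ref{nz0} applies to it).

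The genuine gap is in the converse. The paper's entire proof of the ``only if'' direction is a citation: \cite[Theorem B]{HP1} supplies the germ-level algebraization, and \cite[Proposition 3.1]{HP2} is precisely the statement that for \emph{affine} Nash groups this germ globalizes to a Nash homomorphism with finite kernel. Your sketch invokes Hrushovski--Pillay only for the local isomorphism and then takes on the globalization yourself, but the pivotal sentence --- ``the force of affineness is precisely that this finite-index condition holds'' --- is asserted with no argument, and it is exactly the content of \cite[Proposition 3.1]{HP2}; it is a theorem, not a routine verification. If you intend to cite that proposition, say so and your proof collapses to the paper's; as written, the hard step is missing. Moreover, the universal-cover framework you use to set up the finite-index condition is incorrect as stated: $G^0$ and $\mathsf G(\R)^0$ cannot in general be presented as quotients of a common \emph{simply connected Nash group}, because the common universal cover need not admit any Nash group structure --- your own example, the universal cover of $\SL_2(\R)$, is the standard witness. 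The covers must be handled as Lie groups, with the semialgebraic structure living only on the quotients, which is part of what makes the Hrushovski--Pillay globalization delicate. Finally, the passage from $G^0$ to all of $G$ (``enlarging $\mathsf G$ to carry representatives of $G/G^0$'') is likewise left unargued, whereas in the paper it is subsumed in the cited results.
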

\begin{proof}
The ``if" part follows from the fact that a finite fold cover of an
affine Nash manifold is an affine Nash manifold (\cf
\cite[Proposition III.1.7]{Sh} and \cite[Proposition 2.4]{Sun}). The
``only if" part is proved by E. Hrushovski and A. Pillay (see
\cite[Theorem B]{HP1} and \cite[Proposition 3.1]{HP2}).
\end{proof}

The following lemma asserts that  algebraizations of an affine Nash group is unique up to coverings.

\begin{lem}\label{algec}
Let $G$ be an affine Nash group. Let $\mathsf G_1, \mathsf G_2$ be
two algebraizations of $G$. Then there exist an algebarization
$\mathsf G_3$ of $G$, and two surjective algebraic homomorphisms
$\mathsf G_3 \rightarrow \mathsf G_1$ and $\mathsf G_3\rightarrow
\mathsf G_2$ with finite kernels such that the the diagram
\[
  \xymatrix{
   G \ar[r] \ar[rd] \ar[d] &\mathsf G_1(\R) \\
  \mathsf G_2(\R)  &\mathsf G_3(\R) \ar[l] \ar[u] .
   }
\]
commutes. Here the three arrows starting from $G$ are the
algebraization homomorphisms.
\end{lem}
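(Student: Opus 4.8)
The plan is to realize $\mathsf G_3$ inside the product $\mathsf G_1 \times \mathsf G_2$ as a ``diagonal'' copy of $G$. Write $\phi_1 : G \to \mathsf G_1(\R)$ and $\phi_2 : G \to \mathsf G_2(\R)$ for the two algebraization homomorphisms, and form the Nash homomorphism
\[
  \phi := (\phi_1, \phi_2) : G \to \mathsf G_1(\R) \times \mathsf G_2(\R) = (\mathsf G_1 \times \mathsf G_2)(\R).
\]
Its kernel equals $\ker\phi_1 \cap \ker\phi_2$, which is finite since $\ker\phi_1$ already is. I would then let $\mathsf G_3$ be the Zariski closure of the image $\phi(G)$ in $\mathsf G_1 \times \mathsf G_2$, endowed with its reduced structure. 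Since $\phi(G)$ is a subgroup of $(\mathsf G_1\times\mathsf G_2)(\R)$, its Zariski closure is a closed algebraic subgroup defined over $\R$; as $\R$ is perfect and we are in characteristic zero, the reduced structure is automatically geometrically reduced, so $\mathsf G_3$ is an algebraic group over $\R$ in the sense of the paper. By construction $\phi$ factors through a Nash homomorphism $G \to \mathsf G_3(\R)$ with finite kernel and Zariski-dense image, so $\mathsf G_3$ is an algebraization of $G$.

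For the two homomorphisms I would take $p_1$ and $p_2$ to be the restrictions to $\mathsf G_3$ of the two projections of $\mathsf G_1 \times \mathsf G_2$; these are algebraic homomorphisms over $\R$. Commutativity of the displayed diagram is then immediate from the definition of $\phi$: the composite $G \to \mathsf G_3(\R) \xrightarrow{p_1} \mathsf G_1(\R)$ is $\phi_1$, and likewise $p_2$ recovers $\phi_2$.

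It remains to check that $p_1$ and $p_2$ are surjective with finite kernels. For surjectivity I would use that the image of a homomorphism of algebraic groups over a field is a closed subgroup; the image of $p_1$ is therefore a closed algebraic subgroup of $\mathsf G_1$ whose $\R$-points contain $p_1(\phi(G)) = \phi_1(G)$, and since $\phi_1(G)$ is Zariski dense in $\mathsf G_1$ this image must be all of $\mathsf G_1$. (If one wants the closed-image statement only over an algebraically closed field, base change to $\C$, where the same density argument applies.) For the kernels, the remark following Lemma \ref{nz0} identifies the Lie algebras, hence the dimensions, of $G$, $\mathsf G_1$, $\mathsf G_2$ and $\mathsf G_3$; in particular $\dim \mathsf G_3 = \dim \mathsf G_1$, so the surjection $p_1$ has a $0$-dimensional kernel, which is a finite algebraic group over $\R$. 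The identical argument handles $p_2$.

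The main obstacle is the surjectivity step: one must make sure that ``the image of an algebraic homomorphism is a closed subgroup'' is available in this generality, since $\mathsf G_1$ and $\mathsf G_2$ need not be linear and may carry abelian-variety quotients. This causes no trouble in characteristic zero, where all the groups in sight are smooth, but it is the point that genuinely requires justification; once it is established, the finite-kernel claim is a pure dimension count resting on the consequence of Lemma \ref{nz0}.
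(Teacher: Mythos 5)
Your construction is exactly the paper's: its proof also takes $\mathsf G_3$ to be the Zariski closure of $\{(\phi_1(x),\phi_2(x))\mid x\in G\}$ in $\mathsf G_1\times_{\Spec \R}\mathsf G_2$ with the two projections, and then asserts that the lemma ``easily follows.'' The details you supply --- that the closure is a geometrically reduced (hence algebraic) subgroup in characteristic zero, surjectivity of the projections via closedness of images of algebraic group homomorphisms plus Zariski density, and finiteness of the kernels by the dimension count resting on the remark after Lemma \ref{nz0} --- are correct and are precisely what the paper leaves implicit.
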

\begin{proof}
Take $\mathsf G_3$ to be the Zariski closure of $\{(\phi_1(x),
\phi_2(x))\mid x\in G\}$ in $\mathsf G_1\times_{\Spec \R} \mathsf
G_2$, where $\phi_1: G\rightarrow \mathsf G_1(\R)$ and $\phi_2:
G\rightarrow \mathsf G_2(\R)$ denotes the algebraization
homomorphisms. Then the lemma easily follows.
\end{proof}

\begin{lem}\label{algehom}
Let $\mathsf G$ be an algebraization of an affine Nash group $G$. Let $G'$ be an affine Nash group with a Nash homomorphism $\varphi: G'\rightarrow G$. Then there exists an algebraization $\mathsf G'$ of $G'$, together with an algebraic homomorphism $\underline{\varphi}: \mathsf G'\rightarrow \mathsf G$ such that the diagram
\[
   \begin{CD}
           G' @>\varphi>> G\\
            @V\phi' VV           @VV \phi V\\
            \mathsf G'(\R) @>\underline{\varphi} >> \mathsf G(\R)
  \end{CD}
\]
commutes. Here $\phi'$ and $\phi$ denote the algebraization
homomorphisms.
\end{lem}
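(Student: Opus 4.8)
The plan is to adapt the graph-closure construction from the proof of Lemma \ref{algec}. First, by Theorem \ref{main30} the affine Nash group $G'$ admits some algebraization $\mathsf G_1$, with algebraization homomorphism $\psi\colon G'\to \mathsf G_1(\R)$. I would then consider the Nash homomorphism
\[
\phi'\colon G' \to (\mathsf G_1 \times_{\Spec \R} \mathsf G)(\R) = \mathsf G_1(\R)\times \mathsf G(\R), \qquad x\mapsto (\psi(x),\,\phi(\varphi(x))),
\]
which is Nash since it is assembled from the Nash maps $\psi$, $\varphi$, and $\phi$. Let $\mathsf G'$ be the Zariski closure in $\mathsf G_1 \times_{\Spec \R} \mathsf G$ of the image of $\phi'$. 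Since this image is an abstract subgroup, its Zariski closure is a closed algebraic subgroup (automatically geometrically reduced, as $\R$ has characteristic zero), hence an algebraic group over $\R$.

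Next I would verify that $\mathsf G'$, together with $\phi'$ regarded as a map into $\mathsf G'(\R)$ --- legitimate because the image of $\phi'$ consists of $\R$-points lying on the closed subscheme $\mathsf G'$ --- is an algebraization of $G'$. The image of $\phi'$ is Zariski dense in $\mathsf G'$ by the very definition of $\mathsf G'$, so only the finite-kernel condition remains. Here one observes that $\Ker \phi' \subseteq \Ker \psi$, and $\Ker \psi$ is finite because $\psi$ is an algebraization homomorphism; hence $\Ker \phi'$ is finite. This is precisely the point at which the auxiliary coordinate $\psi$ is needed: the composite $\phi\circ\varphi$ by itself may have infinite kernel (for instance when $\varphi$ is trivial), so one cannot simply take the Zariski closure of its image.

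Finally, I would take $\underline{\varphi}\colon \mathsf G'\to \mathsf G$ to be the restriction to $\mathsf G'$ of the second projection $\mathsf G_1 \times_{\Spec \R} \mathsf G \to \mathsf G$, which is an algebraic homomorphism. The asserted commutativity then holds on the nose: for $x\in G'$,
\[
\underline{\varphi}(\phi'(x)) = \underline{\varphi}\bigl(\psi(x),\,\phi(\varphi(x))\bigr) = \phi(\varphi(x)) = (\phi\circ\varphi)(x).
\]

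The construction is essentially formal. The only steps requiring genuine care are the finite-kernel verification --- which is exactly what forces the inclusion of the extra coordinate $\psi$ --- together with the standard fact that the Zariski closure of an abstract subgroup of an algebraic group is again an algebraic subgroup. I do not anticipate any deeper obstacle.
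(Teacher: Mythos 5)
Your proposal is correct and takes essentially the same route as the paper: the paper likewise picks an auxiliary algebraization $\mathsf G_0'$ of $G'$, takes $\mathsf G'$ to be the Zariski closure in $\mathsf G_0'\times_{\Spec \R}\mathsf G$ of the image of $x\mapsto (\phi_0'(x), \phi(\varphi(x)))$, and defines $\underline{\varphi}$ as the restriction of the second projection. The verifications you spell out (finite kernel via the first coordinate, Zariski density by construction, the closure of a subgroup being an algebraic subgroup) are precisely the details the paper compresses into ``Then the lemma follows.''
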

\begin{proof}
Let $\mathsf G_0'$ be an algebraization of $G'$. Take $\mathsf G'$ to be the Zariski closure in $\mathsf G_0'\times_{\Spec \R} \mathsf G$ of the image of the homomorphism
\[
   G'\rightarrow \mathsf G_0'(\R)\times \mathsf G(\R), \quad x\mapsto (\phi_0'(x), \phi(\varphi(x))),
\]
where $\phi_0': G'\rightarrow \mathsf G_0'(\R)$ denotes the
algebraization homomorphisms. Let $\underline{\varphi}$ be the
restriction to $\mathsf G'$ of the projection homomorphism $\mathsf
G_0'\times_{\Spec \R} \mathsf G\rightarrow \mathsf G$. Then the
lemma follows.
\end{proof}

\begin{lem}\label{alge}
Let $\mathsf G$ be an algebraization of an affine Nash group $G$. Then $G$ is almost linear if and only if $\mathsf G$ is linear as an algebraic group.
\end{lem}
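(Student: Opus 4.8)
The plan is to treat the two implications separately, the ``if'' direction being essentially immediate and the converse requiring the algebraization machinery of Lemmas \ref{algec} and \ref{algehom} together with Proposition \ref{exactalg}.

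First, the ``if'' direction. Suppose $\mathsf G$ is linear, so that it is realized as a closed algebraic subgroup of some $\GL(n)_{/\R}$. The inclusion induces an injective Nash homomorphism $\mathsf G(\R)\hookrightarrow \GL_n(\R)$ on real points. Precomposing with the algebraization homomorphism $\phi\colon G\rightarrow \mathsf G(\R)$, which has finite kernel by definition, produces a Nash homomorphism $G\rightarrow \GL_n(\R)$ with finite kernel; hence $G$ is almost linear.

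For the converse, suppose $G$ is almost linear and fix a Nash homomorphism $\psi\colon G\rightarrow \GL_n(\R)$ with finite kernel. The Nash group $\GL_n(\R)=\mathsf{GL}(n)(\R)$ has the linear algebraic group $\mathsf{GL}(n)$ as an algebraization, via the identity map, its real points being Zariski dense. I would then apply Lemma \ref{algehom} to $\psi$, viewed as a Nash homomorphism from the affine Nash group $G$ into $\GL_n(\R)$, to obtain an algebraization $\mathsf G'$ of $G$ together with an algebraic homomorphism $\underline{\psi}\colon \mathsf G'\rightarrow \mathsf{GL}(n)$ compatible with $\psi$ and the algebraization homomorphisms, so that $\psi=\underline{\psi}\circ\phi'$ where $\phi'\colon G\rightarrow \mathsf G'(\R)$ is the algebraization homomorphism. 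The key claim is that $\underline{\psi}$ has finite kernel, which I would verify on Lie algebras: by Lemma \ref{nz0} the map $\phi'$ has finite kernel and open image, hence induces an isomorphism $\Lie G\xrightarrow{\sim}\Lie \mathsf G'$; since $\psi$ has finite kernel, $d\psi$ is injective, whence $d\underline{\psi}$ is injective and so $\ker\underline{\psi}$ is finite (in characteristic zero $\Lie(\ker\underline{\psi})=\ker d\underline{\psi}=0$). The image $\mathsf M:=\underline{\psi}(\mathsf G')$ is a closed subgroup of $\GL(n)$, hence affine and therefore linear, and the exact sequence $1\rightarrow \ker\underline{\psi}\rightarrow \mathsf G'\rightarrow \mathsf M\rightarrow 1$, with the finite group $\ker\underline{\psi}$ being linear, shows via Proposition \ref{exactalg}(a) that $\mathsf G'$ is linear.

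It remains to transport linearity from the auxiliary algebraization $\mathsf G'$ back to the given one $\mathsf G$, and this is the step I expect to be the only genuine subtlety, since linearity is a property of a specific algebraic group rather than of $G$ itself, and $\mathsf G'$ need not coincide with $\mathsf G$. Here I would invoke Lemma \ref{algec} to obtain a common algebraization $\mathsf G_3$ of $G$ equipped with surjective, finite-kernel algebraic homomorphisms $\mathsf G_3\rightarrow \mathsf G$ and $\mathsf G_3\rightarrow \mathsf G'$. Applying Proposition \ref{exactalg}(a) to $1\rightarrow \ker\rightarrow \mathsf G_3\rightarrow \mathsf G'\rightarrow 1$ (with $\mathsf G'$ linear and the finite kernel linear) shows $\mathsf G_3$ is linear, and applying it once more to $1\rightarrow \ker\rightarrow \mathsf G_3\rightarrow \mathsf G\rightarrow 1$ then forces $\mathsf G$ to be linear, as desired. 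The only points demanding care in writing this out are the identification of Lie algebras under an algebraization homomorphism and the repeated bookkeeping with the exact sequences furnished by Proposition \ref{exactalg}.
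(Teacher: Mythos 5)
Your proof is correct, and its overall skeleton coincides with the paper's: manufacture a \emph{linear} algebraization of $G$ related to $\GL(n)_{/\R}$, then use Lemma \ref{algec} to dominate both it and the given $\mathsf G$ by a common algebraization $\mathsf G_3$, and apply Proposition \ref{exactalg}(a) twice --- once to pull linearity up to $\mathsf G_3$ along a finite-kernel surjection, once to push it down to $\mathsf G$. Where you differ is the middle step. The paper simply takes $\mathsf G_1$ to be the Zariski closure of $\psi(G)$ in $\GL(n)_{/\R}$ and observes that $\mathsf G_1$ is itself an algebraization of $G$ (finite kernel and Zariski density are immediate from the choice of $\psi$), with $\mathsf G_1$ linear by fiat as a closed algebraic subgroup of $\GL(n)_{/\R}$; no Lie-algebra argument appears. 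You instead route through Lemma \ref{algehom}, which obliges you to show $\underline{\psi}$ has finite kernel --- your argument ($d\phi'$ an isomorphism via Lemma \ref{nz0}, $d\psi$ injective since $\psi$ has finite kernel, hence $\Lie(\ker\underline{\psi})=\ker d\underline{\psi}=0$ in characteristic zero) is valid and consistent with the paper's own remark after Lemma \ref{nz0} --- and then to run Proposition \ref{exactalg}(a) an extra time on $1\rightarrow \ker\underline{\psi}\rightarrow \mathsf G'\rightarrow \mathsf M\rightarrow 1$, using that images of algebraic group homomorphisms are closed and that finite group schemes are linear. All of this is sound, but it is an avoidable detour: unwinding the proof of Lemma \ref{algehom}, your $\mathsf G'$ is precisely the Zariski closure of a graph, and projecting to the $\GL(n)$ factor is what the paper's $\mathsf G_1$ captures in one line. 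The trade-off is that your version isolates a reusable principle --- a finite-kernel Nash homomorphism algebraizes to a finite-kernel algebraic homomorphism --- at the cost of Lie-algebra bookkeeping the paper never needs.
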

\begin{proof}
The ``if" part is trivial. To prove the ``only if" part of the lemma, we assume that $G$ is almost linear. Take a Nash homomorphism $\varphi: G\rightarrow \GL_n(\R)$ with finite kernel. Denote by $\mathsf G_1$ the Zariski closure of $\varphi(G)$ in the algebraic group $\GL(n)_{/\R}$. Put $\mathsf G_2:=\mathsf G$, and let $\mathsf G_3$ be as in Lemma \ref{algec}. Since $\mathsf G_1$ is linear, part (a) of Proposition \ref{exactalg} implies that $\mathsf G_3$ is linear, which further implies that $\mathsf G_2$ is linear.
\end{proof}

\begin{lem}\label{alge2}
Let $\mathsf G$ be an algebraization of a connected affine Nash group $G$. Then $G$ is an abelian Nash manifold if and only if $\mathsf G$ is an abelian variety.
\end{lem}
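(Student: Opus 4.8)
The plan is to prove both implications via the dictionary between Nash subgroups of $G$ and algebraic subgroups of $\mathsf G$ furnished by the preceding lemmas. Throughout, write $\phi: G\to \mathsf G(\R)$ for the algebraization homomorphism; it has finite kernel and, by Lemma \ref{nz0}, Zariski dense and open image. Since $G$ is connected, $\phi(G)$ is connected, and as the map from a connected set to the finite group $\mathsf G/\mathsf G^\circ$ is constant, $\phi(G)$ lands in $\mathsf G^\circ(\R)$; being Zariski dense, this forces $\mathsf G=\mathsf G^\circ$, so $\mathsf G$ is connected. This is what will let me apply Chevalley's theorem (Theorem \ref{mainch}) to $\mathsf G$ over the perfect field $\R$.

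First I would treat the implication that if $\mathsf G$ is an abelian variety then $G$ is an abelian Nash manifold. As $G$ is connected and affine, it suffices to verify completeness. Let $N$ be a connected almost linear Nash subgroup of $G$; then $N$ is affine, so Lemma \ref{algehom} applied to the inclusion $N\hookrightarrow G$ produces an algebraization $\mathsf N$ of $N$ together with an algebraic homomorphism $\underline\iota:\mathsf N\to\mathsf G$ compatible with $\phi$. Since $N$ is almost linear, $\mathsf N$ is linear by Lemma \ref{alge}, and since $N$ is connected, $\mathsf N$ is connected by the argument above. By Lemma \ref{disj} there is no non-trivial algebraic homomorphism from the connected linear group $\mathsf N$ to the abelian variety $\mathsf G$, so $\underline\iota$ is trivial. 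Chasing the commuting diagram then gives $\phi(N)=\{e\}$, whence $N\subseteq\ker\phi$; as $\ker\phi$ is finite and $N$ connected, $N$ is trivial. Thus $G$ has no non-trivial connected almost linear Nash subgroup, i.e. it is complete.

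Conversely, suppose $G$ is an abelian Nash manifold and I want $\mathsf G$ to be an abelian variety. By Chevalley's theorem there is a unique connected normal linear algebraic subgroup $\mathsf L\subseteq\mathsf G$ with $\mathsf G/\mathsf L$ an abelian variety, so it is enough to show $\mathsf L$ is trivial. Assume $\mathsf L\neq 1$, hence $\dim\mathsf L>0$, and set $K:=\phi^{-1}(\mathsf L(\R))$, a Nash subgroup of $G$, with identity component $K^\circ$. Choosing a closed embedding $\mathsf L\hookrightarrow \GL(m)_{/\R}$, the composite $K^\circ\to\mathsf L(\R)\hookrightarrow \GL_m(\R)$ is a Nash homomorphism whose kernel lies in the finite group $\ker\phi$, so $K^\circ$ is almost linear. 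Because $\phi$ has finite kernel it is a local Nash diffeomorphism onto the open set $\phi(G)$, so near the identity $K$ is identified with the open subset $\mathsf L(\R)\cap\phi(G)$ of the smooth manifold $\mathsf L(\R)$; hence $\dim K^\circ=\dim\mathsf L>0$ and $K^\circ$ is non-trivial. This contradicts the completeness of $G$, forcing $\mathsf L=1$ and therefore $\mathsf G\cong\mathsf G/\mathsf L$ to be an abelian variety.

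The step I expect to be the main obstacle is this last one: extracting from a non-trivial linear part $\mathsf L$ a genuinely non-trivial connected almost linear Nash subgroup of $G$. The almost-linearity of $K^\circ$ is immediate once $\phi$ is composed with a linear embedding of $\mathsf L$; the delicate point is the dimension count ensuring $K^\circ\neq 1$. This relies on $\phi$ being \'etale, so that $d\phi$ is a Lie algebra isomorphism and $K=\phi^{-1}(\mathsf L(\R))$ inherits dimension $\dim\mathsf L$, equivalently on the fact that a smooth positive-dimensional $\mathsf L$ has $\mathsf L(\R)$ positive-dimensional near the identity; this comparison of the algebraic dimension of $\mathsf L$ with the real dimension of $\mathsf L(\R)$ is the part that should be stated with care.
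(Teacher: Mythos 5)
Your proof is correct and takes essentially the same route as the paper's: both directions run through Chevalley's theorem (Theorem \ref{mainch}), Lemma \ref{alge} and Lemma \ref{disj}, transferring subgroups back and forth along the algebraization homomorphism $\phi$ (the paper uses the Zariski closure of $\phi(H)$ where you invoke Lemma \ref{algehom}, a cosmetic difference). The dimension count you flag as delicate --- that a non-trivial $\mathsf L$ yields a non-trivial $\phi^{-1}(\mathsf L(\R))^\circ$ because $\phi$ is a local diffeomorphism onto its open image --- is precisely what the paper leaves implicit in its one-line step ``$\phi^{-1}(\mathsf L(\R))$ is finite, hence $\mathsf L$ is trivial,'' so your spelled-out version is a faithful (indeed more careful) rendering of the same argument.
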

\begin{proof}
Let $\phi: G\rightarrow \mathsf G(\R)$ denote the algebraization
homomorphism. First note that $\mathsf G$ is connected since it is
the closure of a connected subset.

Let $\mathsf L$ be the connected linear algebraic subgroup of
$\mathsf G$ as in Theorem \ref{mainch}.  Then $\phi^{-1}(\mathsf
L(\R))$ is an almost linear Nash group in $G$. If $G$ is an abelian
Nash manifold, then $\phi^{-1}(\mathsf L(\R))$ is finite. Hence
$\mathsf L$ is trivial and $\mathsf G=\mathsf G/\mathsf L$ is an
abelian variety.

Let $H$ be a connected almost linear Nash subgroup of $G$. Denote by $\mathsf H$ the Zariski closure of $\phi(H)$ in $\mathsf G$, which is a connected algebraic subgroup of $\mathsf G$.
By Lemma \ref{alge}, $\mathsf H$ is linear.   If $\mathsf G$ is an abelian variety, then Lemma \ref{disj} implies that $\mathsf H$ is trivial.
Hence $H$ is trivial. This proves that $G$ is an abelian Nash manifold.

\end{proof}

\begin{lem}\label{alge3}
Let $\mathsf G$ be an algebraization of an affine Nash group $G$.
Then $G$ is
complete  if and only if $\mathsf G$ is complete as an
algebraic variety.
\end{lem}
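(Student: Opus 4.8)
The plan is to reduce both implications to the connected case and then quote Lemma~\ref{alge2}. Write $\phi\colon G\to \mathsf G(\R)$ for the algebraization homomorphism, and let $G^0$ and $\mathsf G^0$ denote the respective identity components. First I would check that completeness is governed by the identity component on both sides. Any connected Nash subgroup of $G$ contains the identity and is connected, hence lies in $G^0$; therefore $G$ has a non-trivial connected almost linear Nash subgroup if and only if $G^0$ does, so $G$ is complete if and only if $G^0$ is complete. On the algebraic side, $\mathsf G^0$ is an open algebraic subgroup of $\mathsf G$, so Lemma~\ref{open} gives that $\mathsf G$ is complete if and only if $\mathsf G^0$ is complete.

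The key technical point is that $\mathsf G^0$ is an algebraization of $G^0$. Since a Nash group, being semialgebraic, has only finitely many connected components, $G^0$ has finite index in $G$. Writing $G$ as a finite union of $G^0$-cosets and applying $\phi$ shows that $\mathsf G$, which is the Zariski closure of $\phi(G)$, is covered by finitely many translates of the closed connected subgroup $\mathsf K$ obtained as the Zariski closure of $\phi(G^0)$ in $\mathsf G$. Hence $\mathsf K$ has finite index in $\mathsf G$, so it is of full dimension; being connected it is contained in $\mathsf G^0$, and a full-dimensional closed connected subgroup of the connected group $\mathsf G^0$ must equal $\mathsf G^0$. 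Thus $\phi(G^0)$ is Zariski dense in $\mathsf G^0$, and since $\phi|_{G^0}$ still has finite kernel, $\mathsf G^0$ is an algebraization of $G^0$; in particular, by Theorem~\ref{main30}, $G^0$ is an affine Nash group.

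With these reductions the lemma follows at once. By definition a connected affine Nash group is complete exactly when it is an abelian Nash manifold, so $G^0$ is complete if and only if $G^0$ is an abelian Nash manifold. By Lemma~\ref{alge2}, applied to the algebraization $\mathsf G^0$ of the connected group $G^0$, this holds if and only if $\mathsf G^0$ is an abelian variety; and since $\mathsf G^0$ is connected, being an abelian variety is the same as being complete. Chaining these equivalences with the two reductions of the first paragraph yields that $G$ is complete if and only if $\mathsf G$ is complete.

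I expect the main obstacle to be the density argument of the second paragraph, namely verifying that passing to identity components genuinely produces an algebraization of $G^0$, i.e. that $\phi(G^0)$ is Zariski dense in $\mathsf G^0$ and not merely in some proper subgroup. (One could instead avoid this reduction and argue each direction directly in the spirit of Lemmas~\ref{alge} and \ref{alge2}: for the ``if'' direction, the Zariski closure of $\phi(H)$ of a non-trivial connected almost linear $H$ would be linear by Lemma~\ref{alge}, hence a complete linear connected group, hence trivial, forcing $H$ trivial; for the ``only if'' direction, applying Chevalley to $\mathsf G^0$ would produce a non-trivial linear part whose $\phi$-preimage is a positive-dimensional almost linear subgroup of $G$.) Once the density point is settled, everything else is either definitional or a direct citation of Lemma~\ref{alge2} and Lemma~\ref{open}.
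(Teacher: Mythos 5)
Your proof is correct and takes essentially the same route as the paper's: reduce both sides to identity components (using Lemma \ref{open} on the algebraic side), observe that $\mathsf G^\circ$ is an algebraization of $G^\circ$, and apply Lemma \ref{alge2}. The only difference is that you carefully verify the density claim that $\mathsf G^\circ$ is the Zariski closure of $\phi(G^\circ)$ via the finite-index argument, a point the paper asserts without proof.
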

\begin{proof}
Note that $G$ is complete if and only if its identity connected
component $G^\circ$ is an abelian Nash manifold. Similarly, by Lemma
\ref{open}, $\mathsf G$ is complete if and only if its identity
connected component $\mathsf G^\circ$ is an abelian variety.  Since
$\mathsf G^\circ$ is an algebraization of $G^\circ$, the lemma is a
direct consequence of Lemma \ref{alge2}.
\end{proof}

\begin{corl}\label{abncc}
Every abelian Nash manifold is commutative and compact as a Lie group.
\end{corl}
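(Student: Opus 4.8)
The plan is to transport everything through an algebraization and then invoke the two classical facts recorded in Section~2, that an abelian variety is both commutative and projective. Let $G$ be an abelian Nash manifold, that is, a connected, complete, affine Nash group. By Theorem~\ref{main30} it admits an algebraization $\phi\colon G\to\mathsf G(\R)$, with $\phi$ of finite kernel and Zariski-dense image. Since $G$ is connected, Lemma~\ref{alge2} applies and tells us that $\mathsf G$ is an abelian variety; hence $\mathsf G$ is commutative and projective. It then remains to push commutativity and compactness back across $\phi$, the only subtlety being that $\phi$ has a possibly non-trivial, but finite, kernel.

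For commutativity I would argue as follows. Because $\mathsf G$ is commutative, so is the abstract group $\mathsf G(\R)$, and therefore so is its subgroup $\phi(G)$. Thus the commutator $ghg^{-1}h^{-1}$ lies in $\Ker\phi$ for all $g,h\in G$. Consider the commutator map $c\colon G\times G\to G$, $(g,h)\mapsto ghg^{-1}h^{-1}$; it is Nash, hence continuous, and by the above its image is contained in the \emph{finite} set $\Ker\phi$. Since $G$ is connected, $G\times G$ is connected, so $c$ is constant, equal to its value $c(e,e)=e$ at the identity. Hence $G$ is commutative. Note that one really does need this connectedness argument: knowing only that $G/\Ker\phi$ is abelian would not suffice, since a finite central extension of an abelian group need not be abelian.

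For compactness, first recall from the remark following Lemma~\ref{nz0} that $G$ and $\mathsf G$ have the same dimension; since $\Ker\phi$ is finite, the differential of $\phi$ at the identity is injective, hence an isomorphism, and by homogeneity $\phi$ is a local Nash diffeomorphism. Consequently $\phi(G)$ is an open subgroup of $\mathsf G(\R)$ and $\phi\colon G\to\phi(G)$ is a finite covering map. On the other hand $\mathsf G$, being projective, sits as a closed subvariety of some $\BP^n_{/\R}$, so $\mathsf G(\R)$ is a closed subset of the compact space $\BP^n(\R)$ and is therefore compact. An open subgroup is also closed, so $\phi(G)$ is a closed, hence compact, subset of $\mathsf G(\R)$; and the total space of a finite covering over a compact Hausdorff base is compact, so $G$ is compact. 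The step I expect to demand the most care is precisely this transfer across the finite kernel: verifying that $\phi$ is genuinely a finite covering onto an open subgroup, so that compactness descends, and running the connectedness argument that forces commutativity despite the finite kernel.
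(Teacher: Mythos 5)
Your proof is correct and follows essentially the same route as the paper: algebraize $G$, apply Lemma \ref{alge2} to conclude that $\mathsf G$ is an abelian variety, and transfer commutativity and compactness back across the finite-kernel algebraization homomorphism. The paper merely compresses your detailed commutator-map and finite-covering arguments into the single observation that $G$ is connected and admits a Lie group homomorphism with finite kernel onto the identity component of the commutative compact Lie group $\mathsf G(\R)$.
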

\begin{proof}
Let $G$ be an abelian Nash manifold. Let $\mathsf G$ be an
algebraization of $G$, which is an abelian variety by Lemma
\ref{alge2}. Then $\mathsf G(\R)$ is a commutative compact Lie
group. Since $G$ is connected and there is a Lie group homomorphism
with finite kernel from $G$ onto the identity connected component of
$\mathsf G(\R)$, we know that $G$ is commutative and compact.

\end{proof}

\begin{corl}\label{disjn}
  Let $G_1$ be a connected almost linear Nash group, and let $G_2$ be an abelian Nash manifold.
  Then there is no non-trivial Nash homomorphism from $G_1$ to $G_2$, and no non-trivial algebraic homomorphism from $G_2$ to $G_1$.
  \end{corl}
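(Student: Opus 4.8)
The plan is to reduce both non-existence statements to the purely algebraic Lemma \ref{disj} by passing to algebraizations, transferring the hypotheses via Lemma \ref{alge} and Lemma \ref{alge2}, and then descending the resulting triviality back to the Nash level using connectedness. (As in the first half of the statement, I read both assertions as concerning \emph{Nash} homomorphisms; the groups $G_1$ and $G_2$ are affine because an almost linear Nash group is affine and an abelian Nash manifold is by definition an affine Nash group.)

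For the non-existence of a non-trivial Nash homomorphism $G_1\to G_2$, suppose $\varphi\colon G_1\to G_2$ is such a homomorphism and fix an algebraization $\mathsf G_2$ of $G_2$. Since $G_2$ is an abelian Nash manifold, Lemma \ref{alge2} shows that $\mathsf G_2$ is an abelian variety. Applying Lemma \ref{algehom} to $\varphi$ produces an algebraization $\mathsf G_1$ of $G_1$ and an algebraic homomorphism $\underline{\varphi}\colon \mathsf G_1\to \mathsf G_2$ fitting into the commutative square of that lemma. Here $\mathsf G_1$ is connected, being the Zariski closure of the connected image of $G_1$ under its algebraization homomorphism, and it is linear by Lemma \ref{alge} because $G_1$ is almost linear. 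Thus $\underline{\varphi}$ is an algebraic homomorphism from a connected linear algebraic group to an abelian variety, so it is trivial by Lemma \ref{disj}.

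Symmetrically, for a Nash homomorphism $\psi\colon G_2\to G_1$, I would instead fix an algebraization $\mathsf G_1$ of $G_1$, which is linear by Lemma \ref{alge}, and apply Lemma \ref{algehom} to $\psi$ to obtain an algebraization $\mathsf G_2$ of $G_2$ together with an algebraic homomorphism $\underline{\psi}\colon \mathsf G_2\to \mathsf G_1$ in a commutative square. Again $\mathsf G_2$ is an abelian variety by Lemma \ref{alge2}, so $\underline{\psi}$ is an algebraic homomorphism from an abelian variety to a linear algebraic group, and hence trivial by Lemma \ref{disj}.

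In both cases what remains is to descend from triviality of the algebraic homomorphism to triviality of the Nash homomorphism, and this is the only delicate step. Chasing the commutative square of Lemma \ref{algehom}, triviality of $\underline{\varphi}$ (resp. $\underline{\psi}$) forces the image of the source under $\varphi$ (resp. $\psi$), composed with the target algebraization homomorphism, to be the identity; hence this image lands in the \emph{finite} kernel of that algebraization homomorphism. Since the source group ($G_1$, resp. $G_2$) is connected and the Nash homomorphism is continuous, the image is a connected subset of a finite set, so it is the single point $\{e\}$ and the homomorphism is trivial. Connectedness is precisely what the argument needs here: the finite kernel of an algebraization homomorphism would otherwise leave room for a non-trivial homomorphism, so the hypothesis that $G_1$ is connected, and that $G_2$ is connected as an abelian Nash manifold, is essential.
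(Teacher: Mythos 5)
Your proof is correct and follows essentially the same route as the paper, whose proof of Corollary \ref{disjn} is exactly the one-line reduction to Lemmas \ref{alge}, \ref{alge2}, \ref{algehom} and \ref{disj} that you carry out. You simply make explicit the descent step (image contained in the finite kernel of the algebraization homomorphism, hence trivial by connectedness) that the paper leaves as ``easily follows,'' and your reading of the second assertion as concerning Nash homomorphisms matches the evident intent of the statement.
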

\begin{proof}
In view of Lemmas \ref{alge} and \ref{alge2}, the corollary easily
follows from Lemma \ref{algehom} and Lemma \ref{disj}.
\end{proof}

\section{Chevalley's theorem for affine Nash groups}

We begin with the following proposition, which defines the quotients of affine Nash groups.
\begin{prp}\label{quotient0}
Let $G$ be an affine Nash group, and let $H$ be a Nash
subgroup of it. Then there exists a unique Nash structure on the
quotient topological space $G/H$ which makes the quotient map
$G\rightarrow G/H$ a submersive Nash map. With this Nash structure,
$G/H$ becomes an affine Nash manifold, and the left translation map
$G\times G/H\rightarrow G/H$ is a Nash map. Furthermore, if $H$ is a
normal Nash subgroup of $G$, then the topological group $G/H$
becomes an affine Nash group under this Nash structure.
\end{prp}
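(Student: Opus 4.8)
The plan is to reduce everything to the algebraic quotient $\mathsf G/\mathsf H$ via an algebraization, and to handle uniqueness by a sheaf-theoretic characterization of the quotient Nash structure. I would first isolate a general principle: if $\pi\colon M\to N$ is a surjective submersive Nash map, then a function $f$ on $N$ is Nash if and only if $f\circ\pi$ is Nash on $M$, the nontrivial direction following from local Nash sections $s$ of $\pi$, along which $f=(f\circ\pi)\circ s$. Applied to $\pi\colon G\to G/H$, this shows that any two Nash structures making $\pi$ submersive have the same sheaf of Nash functions and hence coincide, giving uniqueness. The same local-section principle lets one descend Nash maps through $\pi$, and I would use it repeatedly below for the translation, multiplication, and inversion maps.

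For existence, I would fix an algebraization $\mathsf G$ of $G$ with algebraization homomorphism $\phi\colon G\to\mathsf G(\R)$ (Theorem \ref{main30}). The Nash subgroup $H$ is a closed Nash submanifold of the affine Nash manifold $G$, hence affine; let $\mathsf H\subseteq\mathsf G$ be the Zariski closure of $\phi(H)$, which is then an algebraic subgroup and an algebraization of $H$. By Proposition \ref{quoa} the quotient $\mathsf G/\mathsf H$ is a smooth algebraic variety, and by Corollary \ref{maincr} it is quasi-projective; since real projective space is affine as a Nash manifold, $(\mathsf G/\mathsf H)(\R)$ is an affine Nash manifold and the projection $p_\R\colon\mathsf G(\R)\to(\mathsf G/\mathsf H)(\R)$ is a submersive Nash map. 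Now $\phi(G)$ is a Zariski-dense Nash subgroup of $\mathsf G(\R)$, hence open by Lemma \ref{nz0}, so the composite $p_\R\circ\phi\colon G\to(\mathsf G/\mathsf H)(\R)$ has open image $Q$, an affine Nash manifold, and corestricts to a Nash submersion $G\to Q$. Its fibers are exactly the cosets of $H':=\phi^{-1}(\mathsf H(\R))$, so it factors as $G\to Q$ followed by a continuous open bijection $G/H'\xrightarrow{\sim}Q$; transporting the Nash structure from $Q$ equips $G/H'$ with an affine Nash structure for which $G\to G/H'$ is submersive.

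It remains to pass from $H'$ to $H$. By \cite[Theorem 3.20]{Cos} the Zariski-dense subgroup $\phi(H)$ satisfies $\dim H=\dim\mathsf H=\dim H'$, so $H$ is open in $H'$ and therefore contains $(H')^\circ$; since a Nash group has finitely many connected components, $[H':H]$ is finite. Hence $G/H\to G/H'$ is a finite covering map, so by \cite[Proposition III.1.7]{Sh} the space $G/H$ carries a Nash structure making this covering a local Nash diffeomorphism, and it is again affine; composing shows $G\to G/H$ is submersive Nash. The left translation $G\times G/H\to G/H$ is then Nash, because its pullback along the surjective submersion $\mathrm{id}\times\pi$ is the Nash map $(g,x)\mapsto\pi(gx)$ and hence descends; and when $H$ is normal, the multiplication and inversion of the group $G/H$ descend from those of $G$ in the same way, making $G/H$ an affine Nash group.

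The main obstacle I anticipate is precisely the discrepancy between $H$ and its algebraic saturation $H'=\phi^{-1}(\mathsf H(\R))$: the algebraization only sees $H$ up to the finite kernel of $\phi$ and the component group of $H'$, so the construction naturally produces $G/H'$ rather than $G/H$. The finite-covering step closes this gap, but it relies essentially on the two finiteness inputs, namely finiteness of $\ker\phi$ and of $\pi_0(H')$, together with the stability of affineness under finite covers. A secondary point requiring care is the verification that $\phi(H)$ really is a Zariski-dense Nash subgroup of $\mathsf H(\R)$, so that Lemma \ref{nz0} and \cite[Theorem 3.20]{Cos} apply; this is exactly where one uses that $H$ is itself affine with algebraization $\mathsf H$.
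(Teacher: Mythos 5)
Your proof is correct, and structurally it is the paper's proof: algebraize $G$ (Theorem \ref{main30}), take $\mathsf H$ to be the Zariski closure of $\phi(H)$, use Proposition \ref{quoa} and Corollary \ref{maincr} to get a smooth quasi-projective model of $\mathsf G/\mathsf H$, exhibit $G/H$ as a finite cover of a semialgebraic image, lift the Nash structure through the finite covering, and descend the translation and group operations via local Nash sections. The one genuinely different step is the middle one. The paper maps $G$ into the \emph{complex} points $\mathsf G(\C)/\mathsf H(\C)$; there the image of $G$ is nowhere near open, so the paper must invoke \cite[Proposition 5.53]{BOR}, via homogeneity, to see that this image is a Nash submanifold. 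You stay with the \emph{real} points: since $\phi(G)$ is open in $\mathsf G(\R)$ (Lemma \ref{nz0}) and the projection $\mathsf G(\R)\to(\mathsf G/\mathsf H)(\R)$ is an open Nash submersion, your image $Q\subseteq(\mathsf G/\mathsf H)(\R)$ is open and semialgebraic, hence an affine Nash manifold with no extra input. Your variant trades the homogeneity lemma for the openness of submersions, which is more elementary, and it also makes explicit two things the paper compresses into the phrase ``finite-fold covering map'' at \eqref{covergh}: that the fibers of the map to the image are exactly the cosets of $H'=\phi^{-1}(\mathsf H(\R))$ (note $\mathsf H(\C)\cap\mathsf G(\R)=\mathsf H(\R)$, so the fibers in the paper's complex-points version are the same), and that $[H':H]$ is finite, because $\dim H=\dim\mathsf H=\dim H'$ by \cite[Theorem 3.20]{Cos}, so $H\supseteq (H')^\circ$, while a semialgebraic group has finitely many components. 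Two small points to tighten: Lemma \ref{nz0} is stated for Nash subgroups of $\mathsf G(\R)$, so you should record that the semialgebraic subgroups $\phi(G)$ and $\phi(H)$ are locally closed, hence closed, Nash subgroups before applying it; and your ``same sheaf of Nash functions'' criterion for uniqueness detects Nash maps only into affine targets, so it is cleaner to argue directly, as you in effect do elsewhere, that local Nash sections for one structure composed with the projection of the other exhibit the identity map as Nash in both directions --- this is precisely the role played by \cite[Lemma 2.3]{Sun} and \cite[Lemma 2.7]{Sun} in the paper's write-up.
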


\begin{proof}
Let $\mathsf G$ be an algebraization of $G$, with the algebraization
homomorphism $\phi: G\rightarrow \mathsf G(\R)$. Let $\mathsf H$
denote the Zariski closure of $\phi(H)$ in $\mathsf G$. Then by
Corollary \ref{maincr}, $\mathsf G(\C)/\mathsf H(\C)=(\mathsf
G/\mathsf H)(\C)$ is the set of $\C$-points of a smooth
quasi-projective algebraic variety over $\R$. Therefore $\mathsf
G(\C)/\mathsf H(\C)$ is naturally an affine Nash manifold. Note that
all arrows in
\[
   G\stackrel{\phi}{\rightarrow} \mathsf G(\R)\rightarrow \mathsf G(\C)\rightarrow \mathsf G(\C)/\mathsf H(\C)
\]
are Nash maps. Therefore, the composition, which is denoted by $\phi'$, is also a Nash map.  By \cite[Proposition 5.53]{BOR}, the homogeneity (under the action of $G$) of $\phi'(G)$ implies that $\phi'(G)$  is a Nash submanifold of $\mathsf G(\C)/\mathsf H(\C)$. Note that $\phi'$ induces a finite-fold covering map
\be\label{covergh}
   G/H\rightarrow \phi'(G).
\ee By \cite[Proposition 2.4]{Sun}, there is a unique Nash structure
on the topological space $G/H$ which makes the map \eqref{covergh} a
submersive Nash map. By  \cite[Lemma 2.7]{Sun}, we know that the
quotient map $G\rightarrow G/H$ is a Nash map, with the
aforementioned Nash structure on $G/H$. It is a consequence of
\cite[Theorem 3.62]{Wa} that the  quotient map $G\rightarrow G/H$ is
submersive, and it is implied by
\cite[Proposition III.1.7]{Sh} that the Nash manifold $G/H$ is affine.  This proves the existence of the desired  Nash structure
on $G/H$. All other assertions of the proposition easily follows
from \cite[Lemma 2.3]{Sun}.

\end{proof}

When $G$ is almost linear, Proposition \ref{quotient0} is formulated and proved in \cite[Proposition 1.2]{Sun}. By Proposition \ref{quotient0}, the quotients of an affine Nash group by a Nash subgroup of it is naturally an affine Nash manifold, and is naturally an affine Nash group if the Nash subgroup is normal.

Now we are prepared to formulate and prove Chevalley's theorem for affine Nash groups.

\begin{thmp}
Let $G$ be a connected affine Nash group. Then there exists a unique connected normal almost linear Nash subgroup $L$ of $G$ such that $G/L$ is an abelian Nash manifold.
\end{thmp}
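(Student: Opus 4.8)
The plan is to deduce the statement from the classical Chevalley theorem (Theorem \ref{mainch}) by transporting it through the algebraization dictionary of Section 3. First I would fix, via Theorem \ref{main30}, an algebraization $\phi\colon G\to \mathsf G(\R)$ of $G$. Since $G$ is connected and $\phi(G)$ is Zariski dense, $\mathsf G$ is a connected algebraic group over the perfect field $\R$, so Theorem \ref{mainch} supplies the unique connected, normal, linear algebraic subgroup $\mathsf L$ of $\mathsf G$ with $\mathsf G/\mathsf L$ an abelian variety. The desired $L$ will be built by pulling $\mathsf L$ back through $\phi$.

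For existence, I would set $N:=\phi^{-1}(\mathsf L(\R))$ and $L:=N^\circ$. Because $\mathsf L$ is normal in $\mathsf G$, a direct computation with $\phi$ shows that $N$ is a normal Nash subgroup of $G$, and hence so is its identity component $L$; being the identity component of a closed Nash subgroup, $L$ is a connected Nash subgroup. To see that $L$ is almost linear, I would take the Zariski closure of $\phi(L)$ in $\mathsf G$: it is a closed algebraic subgroup of the linear group $\mathsf L$, hence linear, and it is an algebraization of $L$ since $\phi$ has finite kernel and the image is Zariski dense by construction. Lemma \ref{alge} then yields that $L$ is almost linear.

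It remains to identify the quotient $G/L$, which is an affine Nash group by Proposition \ref{quotient0}. Writing $\pi\colon \mathsf G\to \mathsf G/\mathsf L=:\mathsf A$ for the quotient map, the composite $\pi\circ\phi\colon G\to \mathsf A(\R)$ is a Nash homomorphism whose kernel is exactly $N$ (using that the scheme-theoretic kernel of $\pi$ is $\mathsf L$, and that taking $\R$-points is left exact), so it descends to a Nash homomorphism $G/L\to \mathsf A(\R)$. Its kernel is the component group $N/L$, which is finite because the Nash manifold $N$ has finitely many connected components, and its image is Zariski dense in $\mathsf A$ because $\pi$ is surjective and $\phi(G)$ is Zariski dense in $\mathsf G$. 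Thus $\mathsf A$ is an algebraization of the connected Nash group $G/L$, and since $\mathsf A$ is an abelian variety, Lemma \ref{alge2} gives that $G/L$ is an abelian Nash manifold.

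Finally, for uniqueness, suppose $L_1$ and $L_2$ are two connected normal almost linear Nash subgroups with $G/L_1$ and $G/L_2$ abelian Nash manifolds. Composing the inclusion $L_1\hookrightarrow G$ with the quotient $G\to G/L_2$ produces a Nash homomorphism from a connected almost linear Nash group to an abelian Nash manifold, which must be trivial by Corollary \ref{disjn}; hence $L_1\subseteq L_2$, and symmetry gives $L_1=L_2$. I expect the main technical point to be the third step, namely verifying that the algebraic quotient $\mathsf G/\mathsf L$ is genuinely an algebraization of the Nash quotient $G/L$: this requires simultaneously controlling the finiteness of the component group $N/L$, the Zariski density of the image, and the descent of $\pi\circ\phi$ to a Nash homomorphism on $G/L$. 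Everything else is a formal transcription through the lemmas of Section 3.
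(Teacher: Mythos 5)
Your proof is correct, and its existence half is essentially the paper's argument: algebraize $G$ via Theorem \ref{main30}, apply classical Chevalley (Theorem \ref{mainch}) to get $\mathsf L$, set $L=\bigl(\phi^{-1}(\mathsf L(\R))\bigr)^\circ$, and conclude via Lemmas \ref{alge} and \ref{alge2}. In fact you supply more detail than the paper at the one point it glosses over: the paper simply asserts that $\mathsf L$ is an algebraization of $L$ and that $\mathsf G/\mathsf L$ is an algebraization of $G/L$, whereas you verify the latter explicitly (kernel of $\pi\circ\phi$ equals $N$ by left-exactness of taking $\R$-points, finiteness of $N/L$ from finiteness of components of a semialgebraic set, Zariski density of the image), and you sidestep the need to check that $\phi(L)$ is Zariski dense in $\mathsf L$ itself by using the Zariski closure $\overline{\phi(L)}\subseteq\mathsf L$ as the algebraization of $L$. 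Where you genuinely diverge is uniqueness: the paper transports $L'$ back to the algebraic side, shows $\mathsf L'$ is linear and $\mathsf G/\mathsf L'$ is an abelian variety, invokes the uniqueness clause of Theorem \ref{mainch} to get $\mathsf L'=\mathsf L$, and then concludes $L'=L$ from equality of Lie algebras. You instead argue purely on the Nash side: the composite $L_1\hookrightarrow G\to G/L_2$ is a Nash homomorphism from a connected almost linear Nash group to an abelian Nash manifold, hence trivial by Corollary \ref{disjn}, giving $L_1\subseteq L_2$ and, by symmetry, equality. Your route is self-contained within the Nash category (modulo Corollary \ref{disjn}, which the paper has already established), avoids the Lie-algebra comparison via Lemma \ref{nz0}, and mirrors the standard uniqueness argument for the classical theorem; the paper's route instead reuses the algebraic uniqueness statement wholesale. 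Both are valid, and yours is arguably the cleaner of the two for this step.
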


\begin{proof}
Let $\mathsf G$ be an algebraization of $G$, with the algebraization homomorphism $\phi: G\rightarrow \mathsf G(\R)$. Using Chevalley's theorem, let $\mathsf L$ be the unique connected normal linear algebraic subgroup of $\mathsf G$ such that $\mathsf G/\mathsf L$ is an abelian variety. Let $L$ be the identity connected component of $\phi^{-1}(\mathsf L(\R))$, which is a connected normal Nash subgroup of $G$. Note that $\mathsf L$ is an algebraization of $L$ and $\mathsf G/\mathsf L$ is an algebraization of $G/L$. Therefore Lemma   \ref{alge} and Lemma \ref{alge2} imply that $L$ is an almost linear Nash group, and $G/L$ is an abelian Nash manifold. This proves the ``existence" part of the theorem.

To prove the uniqueness, let $L'$ be a  connected normal almost linear Nash subgroup of $G$ such that $G/L'$ is an abelian Nash manifold. Let $\mathsf L'$ denote the Zariski closure of $\phi(L')$ in $\mathsf G$, which is a connected normal algebraic subgroup of $\mathsf G$. Note that $\mathsf L'$ is an algebraization of $L'$ and $\mathsf G/\mathsf L'$ is an algebraization of $G/L'$.  Lemma   \ref{alge} and Lemma \ref{alge2} again imply that $\mathsf L'$ is a linear algebraic group, and $\mathsf G/\mathsf L'$ is an abelian variety. Then Chevalley's theorem implies that $\mathsf L'=\mathsf L$. Therefore $L'=L$ since they have the same Lie algebra.

\end{proof}

The following result is an analogue of Proposition \ref{exactalg} in the setting of affine Nash groups.

  \begin{thmp}\label{exactnash}
  Let
\be\label{exact}
  1\rightarrow  G'\stackrel{i}{\rightarrow}  G\stackrel{\pi}{\rightarrow}  G''\rightarrow 1
\ee
be a sequence of Nash homomorphisms of affine Nash groups. Assume that it is exact as a sequence of  abstract groups. Then the followings hold true.

(a)  The  affine Nash group $G$  is almost linear if and only if both $G'$ and $G''$ are so.

(b) The affine Nash group $G$  is complete  if and only if both $G'$ and $G''$ are so.
\end{thmp}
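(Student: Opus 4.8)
The plan is to reduce the entire statement to its algebraic counterpart, Proposition \ref{exactalg}, via the dictionary between affine Nash groups and real algebraic groups supplied by Theorem \ref{main30} and Lemmas \ref{alge} and \ref{alge3}. Concretely, I would first algebraize the whole exact sequence \eqref{exact}, and then read off both (a) and (b) from the algebraic case.

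First I would fix an algebraization $\mathsf G$ of $G$ with algebraization homomorphism $\phi\colon G\to\mathsf G(\R)$, and set $\mathsf G'$ to be the Zariski closure $\overline{\phi(\ker\pi)}$ in $\mathsf G$. Since $\ker\phi$ is finite and $\phi(\ker\pi)$ is Zariski dense in $\mathsf G'$ by construction, the composite $G'\xrightarrow{i}\ker\pi\xrightarrow{\phi}\mathsf G'(\R)$ exhibits $\mathsf G'$ as an algebraization of $G'$. Because $\ker\pi$ is normal in $G$, the dense subgroup $\phi(G)$ normalizes $\mathsf G'$; as the normalizer is Zariski closed, $\mathsf G'$ is normal in $\mathsf G$. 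Hence $\mathsf G'':=\mathsf G/\mathsf G'$ is again an algebraic group over $\R$, and $\mathsf G\to\mathsf G''$ is faithfully flat with scheme-theoretic kernel $\mathsf G'$, so that $1\to\mathsf G'\to\mathsf G\to\mathsf G''\to 1$ is exact in the sense required by Proposition \ref{exactalg}.

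The key step is to verify that $\mathsf G''$ is an algebraization of $G''$. The composite $\mathrm{proj}\circ\phi\colon G\to\mathsf G''(\R)$ kills $\ker\pi$, and therefore descends along the submersive Nash quotient map $\pi$ (Proposition \ref{quotient0}) to a Nash homomorphism $\bar\phi\colon G''\to\mathsf G''(\R)$; its image is Zariski dense because $\phi(G)$ is dense and $\mathrm{proj}$ is surjective. The only nontrivial point is that $\bar\phi$ has finite kernel. Writing $N:=\phi^{-1}(\mathsf G'(\R))$, a Nash subgroup of $G$ containing $\ker\pi$, one has $\ker\bar\phi=N/\ker\pi$. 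Since $\phi$ induces an isomorphism on Lie algebras (the remark following Lemma \ref{nz0}), and $\Lie\mathsf G'$ corresponds to $\Lie(\ker\pi)$ under this isomorphism, the groups $N$ and $\ker\pi$ have the same dimension; thus $\ker\pi$ is open in $N$, and because a Nash group has only finitely many connected components, the quotient $N/\ker\pi$ is finite. I expect this finiteness check to be the main obstacle: it is the one place where genuinely Nash-specific input (equality of Lie algebras under algebraization together with finiteness of the number of components) is needed, with no purely algebraic analogue.

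Finally I would translate back. Having the exact sequence of algebraic groups with $\mathsf G',\mathsf G,\mathsf G''$ algebraizations of $G',G,G''$, Lemma \ref{alge} gives that $G$ (resp.\ $G'$, $G''$) is almost linear if and only if $\mathsf G$ (resp.\ $\mathsf G'$, $\mathsf G''$) is linear, and Lemma \ref{alge3} gives the corresponding equivalence between completeness of each Nash group and of its algebraization. Applying parts (a) and (b) of Proposition \ref{exactalg} to $1\to\mathsf G'\to\mathsf G\to\mathsf G''\to 1$ then yields both assertions of the theorem at once.
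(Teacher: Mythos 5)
Your proof is correct, but it runs the reduction to Proposition \ref{exactalg} in the opposite direction from the paper. The paper starts at the quotient end: it fixes an algebraization $\mathsf G''$ of $G''$, invokes Lemma \ref{algehom} to produce a compatible algebraization $\mathsf G$ of $G$ with an algebraic homomorphism $\underline{\pi}\colon \mathsf G\to \mathsf G''$, and takes $\mathsf G'$ to be the scheme-theoretic kernel of $\underline{\pi}$. You instead start at $G$, take $\mathsf G'$ to be the Zariski closure of $\phi(\ker\pi)$, and build $\mathsf G''$ as the fppf quotient $\mathsf G/\mathsf G'$ via Proposition \ref{quoa}. The trade-off is where the delicate verification lands: in your route, $\mathsf G'$ algebraizes $G'$ by construction (density is automatic), but you must check that $\mathsf G''$ algebraizes $G''$, and in particular that the descended homomorphism $\bar\phi$ has finite kernel --- which you handle correctly by the Lie algebra identification from the remark after Lemma \ref{nz0} together with finiteness of the set of connected components of the semialgebraic group $N=\phi^{-1}(\mathsf G'(\R))$. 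In the paper's route, $\mathsf G''$ algebraizes $G''$ by fiat, but the scheme kernel $\mathsf G'$ may have components not met by $\phi(\ker\pi)$, so only an open algebraic subgroup of $\mathsf G'$ is an algebraization of $G'$; the paper absorbs this via Lemma \ref{open}, with an implicit dimension count entirely parallel to yours. Your version also needs the normality of $\mathsf G'$ in $\mathsf G$ (correctly deduced from Zariski density of $\phi(G)$ in the normalizer), a step the paper avoids since kernels are automatically normal; conversely, you dispense with Lemma \ref{open}, since your $\mathsf G'$ is dense-by-construction and Lemma \ref{alge3} already covers the possibly disconnected case. Both routes then conclude identically through Lemmas \ref{alge}, \ref{alge2}/\ref{alge3} and Proposition \ref{exactalg}.
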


\begin{proof}
Let $\mathsf G''$ be an algebraization of $G''$. By Lemma \ref{algehom}, we have an algebraization $\mathsf G$ of $G$, together with an algebraic homomorphism $\underline{\pi}: \mathsf G\rightarrow \mathsf G''$ such that the diagram
\[
   \begin{CD}
           G @>\pi>> G''\\
            @VVV           @VVV\\
            \mathsf G(\R) @>\underline{\pi} >> \mathsf G''(\R)
  \end{CD}
\]
commutes, where the vertical arrows are the algebraization
homomorphisms. Denote by $\mathsf G'$ the kernel of
$\underline{\pi}$, which is an algebraic group over $\R$ and fits to
an exact sequence \be\label{exact}
  1\rightarrow \mathsf G'\stackrel{i}{\rightarrow} \mathsf G\stackrel{\pi}{\rightarrow} \mathsf G''\rightarrow 1
\ee
of algebraic groups. Note that an open algebraic subgroup of $\mathsf G'$ is an algebraization of $G'$. In view of Lemma \ref{open}, Lemmas \ref{alge} and \ref{alge2} imply that $G'$ is almost linear or complete if and only if $\mathsf G'$ is respectively linear or complete. Lemmas \ref{alge} and \ref{alge2} also imply that $G''$ (or $G$) is almost linear or complete if and only if $\mathsf G''$ (or $\mathsf G$, respectively) is respectively linear or complete. Therefore, the proposition is a consequence of Proposition \ref{exactalg}.

\end{proof}

Theorem \ref{semip} is clearly a special case of part (a) of Theorem
\ref{exactnash}.

\end{document}